\newtheorem{theorem}{Theorem}
\newtheorem{corollary}[theorem]{Corollary}
\newtheorem{definition}[theorem]{Definition}
\newtheorem{lemma}[theorem]{Lemma}
\newtheorem{remark}[theorem]{Remark}
\newenvironment{proof}[1][Proof]{\noindent\textbf{#1.} }{\ \rule{0.5em}{0.5em}}
\begin{document}

\begin{center}
\textbf{A transference result for Lebesgue spaces with A}$_{\infty }$\textbf{%
\ weights and its applications}

\textbf{Ramazan Akg\"{u}n}
\end{center}

\begin{quotation}
\textbf{Abstract} In this work we obtain a transference theorem for Lebesgue spaces with $A_{\infty }$ weights, namely, starting from some uniform-norm inequalities it is possible to obtain similar inequalities in Lebesgue
spaces with $A_{\infty }$ weights. This transference technic allows us to obtain some weighted norm inequalities easily. Also transference result gives possibility to use fractional difference operators in weighted Lebesgue spaces easier than the classical known one. We can obtain some norm-like inequalities easily as a consequence. Some important approximation inequalities of approximation by integral functions of finite degree can be obtained with a different proof.
\end{quotation}
\begin{quotation}
\textbf{Key Words} Muckenhoupt weight, Steklov operator, Integral functions
of finite degree, Fractional difference operator, Best approximation.
\end{quotation}
\begin{quotation}
\textbf{2020 Mathematics Subject Classifications} 46E30; 42B20; 42B25; 42B35.
\end{quotation}

\section{Introduction and main results}

\subsection{Preliminary Definitions}

We can give some preliminary definitions to state main results. A function $%
\omega :\mathbb{R}^{d}\mathbb{\rightarrow }\left[ 0,\infty \right] $ will be
called weight if $\omega $ is a measurable and positive function almost
everywhere (a.e.) on $\mathbb{R}^{d}.$ Define $\left\langle \omega
\right\rangle _{A}:=\int_{A}\omega (t)dt$ for $A\subset \mathbb{R}^{d}$. For
a weight $\omega $ on $\mathbb{R}^{d}$, we denote by $L_{p,\omega }$, $%
0<p\leq \infty $ the class of Lebesgue measurable functions $f:\mathbb{R}%
^{d}\rightarrow \mathbb{R}$ such that%
\[
\left\Vert f\right\Vert _{p,\omega }\equiv \left( \int\nolimits_{\mathbb{R}%
^{d}}\left\vert f\left( x\right) \right\vert ^{p}\omega \left( x\right)
dx\right) ^{1/p}<\infty \text{,\quad (}0<p<\infty \text{),}
\]%
\[
\left\Vert f\right\Vert _{\infty ,\omega }\equiv \left\Vert f\right\Vert
_{\infty }\equiv esssup_{x\in \mathbb{R}^{d}}\left\vert f\left(
x\right) \right\vert \text{,\quad }\left( p=\infty \right) \text{.}
\]%
We set $L_{p}\equiv L_{p,1}$ and $\left\Vert f\right\Vert _{p}\equiv
\left\Vert f\right\Vert _{p,1}$ and $\oint_{A}\omega (t)dt\equiv \left(
\left\vert A\right\vert ^{-1}\right) \int_{A}\omega (t)dt$ where $\left\vert
A\right\vert $ denotes the Lebesgue measure of a set $A\subset \mathbb{R}%
^{d}.$

For $1<p<\infty $ and $(1/p)+(1/p^{\prime })=1$ we set $\omega ^{\prime
}:=\omega ^{1-p^{\prime }}$ for a weight $\omega $. A weight $\omega $
satisfies the Muckenhoupt's condition $A_{p}$, $1\leq p<\infty $, if%
\begin{equation}
\left[ \omega \right] _{1}\equiv \sup\limits_{Q\in \mathbb{J}}(\left\vert
Q\right\vert ^{-1}\left\langle \omega \right\rangle _{Q}esssup_{x\in Q}(\omega \left( x\right) ^{-1}))<\infty 
\text{, \ }\left( p=1\right) \text{,}  \label{a1123}
\end{equation}%
\begin{equation}
\left[ \omega \right] _{p}\equiv \sup\limits_{Q\in \mathbb{J}}\left\vert
Q\right\vert ^{-p}\left\langle \omega \right\rangle _{Q}\left\langle \omega
^{\prime }\right\rangle _{Q}^{p-1}<\infty \text{,\quad }\left( 1<p<\infty
\right)  \label{ap}
\end{equation}%
with some finite constants independent of $Q$, where $\mathbb{J}$ is the
class of cubes in $\mathbb{R}^{d}$ with sides parallel to coordinate axes .

Define $A_{\infty }:=\cup _{1\leq p<\infty }A_{p}.$ It is well known that a
characterization of weights $\omega $ in the class $A_{\infty }$ is%
\[
\left[ \omega \right] _{\infty }\equiv \sup\nolimits_{Q}\int\nolimits_{Q}M%
\left[ \omega \left( y\right) \chi _{Q}\left( y\right) \right] dy<\infty
\]%
where $Q$ is any cube in $\mathbb{R}^{d}$ and $M$ is the Hardy-Littlewood
maximal function. Let $\mathbb{S}_{0}$ be the set of integrable simple
functions defined on $\mathbb{R}^{d}$.

\begin{definition}
\label{defReb} Let $u,x\in \mathbb{R}^{d}$, $\omega \in A_{\infty }$ and $%
f\in \mathbb{S}_{0}$. (i) Define weighted Steklov mean%
\[
S_{u,\omega }f\left( x\right) \equiv (\left\langle \omega \right\rangle _{ 
\left[ -1/2,1/2\right] ^{d}})^{-1}\int\nolimits_{\left[ -1/2,1/2\right]
^{d}}f\left( x+u+t\right) \omega \left( t\right) dt.
\]%
(ii) Define%
\[
\mathcal{R}_{u,\omega }f\left( x\right) \equiv \sum\limits_{k=0}^{1}\tfrac{1%
}{2^{k}}\tfrac{\left( S_{u,\omega }\right) ^{k}f\left( x\right) }{\left\Vert
S_{u,\omega }\right\Vert _{\mathcal{B}\left( L_{p,\omega },L_{p,\omega
}\right) }^{k}},\quad \left( S_{u,\omega }\right) ^{0}f\equiv f,
\]%
where $\left\Vert T\right\Vert _{\mathcal{B}\left( U,V\right) }$ is the
operator norm of a bounded operator $T:U\rightarrow V$.
\end{definition}

\begin{definition}
For given $\omega \in A_{\infty }$, $p\in \lbrack 1,\infty )$ we define the
class $\mathcal{Z}\left( p,\omega \right) \equiv $\{$g\in L_{p^{\prime
},\omega }$:$\left\Vert g\right\Vert _{p^{\prime },\omega }=1$\} where as
usual $p^{\prime }\equiv p/(p-1)$ for $p\in (1,\infty )$ and $1^{\prime
}\equiv \infty .$
\end{definition}

\begin{definition}
\label{ef} Let $\omega \in A_{\infty }$, $p\in \left( 0,\infty \right) $, $%
f\in L_{p,\omega }$. (a) For $p\in \lbrack 1,\infty )$ we define%
\begin{equation}
F_{f}\equiv F_{f}\left( u,G,p,\omega \right) \equiv \int\nolimits_{\mathbb{R}%
^{d}}\mathcal{R}_{u,\omega }f\left( x\right) \left\vert G(x)\right\vert
\omega \left( x\right) dx\text{,\quad }u\in \mathbb{R}^{d},  \label{efef}
\end{equation}%
with $G\in \mathcal{Z}\left( p,\omega \right) .$

(b) Let $p\in \left( 0,1\right) $. Since, there is $a_{0}\equiv e^{2^{11+d}%
\left[ \omega \right] _{\infty }}>1$ (see \cite[p.786]{hyPrz13}) such that,
we obtain $\omega \in A_{a}$ with $a\equiv a_{0}+0,01$. Then, one can get a $%
q\in \left( 0,p\right) $ such that $\omega \in A_{p/q}$ (Take for example
any $q$ less than $p/a_{0}$). Now, set $r\equiv p/q$ and define%
\begin{equation}
F_{f}\equiv F_{f}\left( u,G,r,\omega \right) \equiv \int\nolimits_{\mathbb{R}%
^{d}}\left( \mathcal{R}_{u,\omega }f\left( x\right) \right) ^{q}\left\vert
G(x)\right\vert \omega \left( x\right) dx\text{,\quad }u\in \mathbb{R}^{d}
\label{fefe}
\end{equation}%
with $G\in \mathcal{Z}\left( r,\omega \right) .$
\end{definition}

Let $\mathcal{C}(\mathbb{R}^{d})$ be the class of bounded, uniformly
continuous functions defined on $\mathbb{R}^{d}$ and $\left\Vert
f\right\Vert _{\mathcal{C}(\mathbb{R}^{d})}:=\sup \left\{ \left\vert f\left(
t\right) \right\vert :t\in \mathbb{R}^{d}\right\} $ for $f\in \mathcal{C}(%
\mathbb{R}^{d})$.

\begin{remark}
Note that, by Theorem \ref{UCB}, $F_{f}\in \mathcal{C}(\mathbb{R}^{d})$ for $%
\omega \in A_{\infty }$, $p\in \left( 0,\infty \right) $, and $f\in
L_{p,\omega }.$
\end{remark}

\subsection{Main results}

To obtain a weighted norm inequality of the following type%
\begin{equation}
\left\Vert f\right\Vert _{p,\omega }\leq c\left\Vert g\right\Vert _{p,\omega
}  \label{WNI}
\end{equation}%
for $0<p<\infty $, $\omega \in A_{\infty }$, $f\in L_{p,\omega }$, we define
an intermediate function as in Definition \ref{ef}%
\[
F_{f}:\mathbb{R}^{d}\mathbb{\rightarrow }\mathcal{C}\left( \mathbb{R}%
^{d}\right) \text{,\quad }u\mapsto F_{f}\left( u\right)
\]%
having properties%
\[
\left\Vert f\right\Vert _{p,\omega }\leq c\left\Vert F_{f}\left( \cdot
\right) \right\Vert _{\mathcal{C}(\mathbb{R}^{d})}\text{ and }\left\Vert
F_{g}\left( \cdot \right) \right\Vert _{\mathcal{C}(\mathbb{R}^{d})}\leq
c\left\Vert g\right\Vert _{p,\omega }
\]%
for some positive constants. Now, if the following uniform norm estimate%
\[
\left\Vert F_{f}\left( \cdot \right) \right\Vert _{\mathcal{C}\left( \mathbb{%
R}^{d}\right) }\leq c\left\Vert F_{g}\left( \cdot \right) \right\Vert _{%
\mathcal{C}\left( \mathbb{R}^{d}\right) }
\]%
holds, then, we obtain desired weighted norm inequality (\ref{WNI}).

All constants $c>0$ will be some positive number such that, they depend on
the main parameters in question, and change in each occurrences.

Main theorem of this work is the following transference result.

\begin{theorem}
\label{TR} Let $\mathcal{F}$ be a family of couples $\left( f,g\right) $ of
nonnegative functions, $d\in \mathbb{N}$, $p\in \left( 0,\infty \right) $, $%
\omega \in A_{\infty }$ and $f\in L_{p,\omega }$. Suppose that the following
uniform-norm estimate holds%
\begin{equation}
\left\Vert F_{f}\left( \cdot ,G,p,\omega \right) \right\Vert _{\mathcal{C}%
\left( \mathbb{R}^{d}\right) }\leq c\left\Vert F_{g}\left( \left( \cdot
,G^{\ast },p,\omega \right) \right) \right\Vert _{\mathcal{C}\left( \mathbb{R%
}^{d}\right) }  \label{unif}
\end{equation}%
for any $G,G^{\ast }\in \mathcal{Z}\left( p/q,\omega \right) $ where $q:=1$
for $p\in \lbrack 1,\infty )$ and $q\in \left( 0,p\right) $ for $p\in \left(
0,1\right) $. Then, weighted norm inequality%
\[
\left\Vert f\right\Vert _{p,\omega }\leq c\left\Vert g\right\Vert _{p,\omega
}\text{,\quad }\left( f,g\right) \in \mathcal{F}\text{,}
\]%
holds with a positive constant $c:=c\left( q,\omega ,p,d\right) $.
\end{theorem}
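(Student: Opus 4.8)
The strategy is to realize the weighted norm $\left\Vert f\right\Vert_{p,\omega}$ as a supremum (over the test functions $G$ and the translation parameter $u$) of the quantities $F_f(u,G,p,\omega)$, and then to play this off against the corresponding realization of $\left\Vert g\right\Vert_{p,\omega}$ via the hypothesis \eqref{unif}. Concretely, I would first treat the range $p\in[1,\infty)$, where $q=1$. Here the key device is the operator $\mathcal{R}_{u,\omega}$: since $\left\Vert S_{u,\omega}\right\Vert_{\mathcal{B}(L_{p,\omega},L_{p,\omega})}$ appears in the denominators, the Neumann-type series defining $\mathcal{R}_{u,\omega}$ converges and, at $u=0$ one expects $\mathcal{R}_{0,\omega}f\ge f$ pointwise (all terms are nonnegative since $f\ge 0$ and $S_{u,\omega}$ is averaging against $\omega\ge 0$), while for every $u$ one has $\left\Vert\mathcal{R}_{u,\omega}f\right\Vert_{p,\omega}\le 2\left\Vert f\right\Vert_{p,\omega}$ because $\tfrac1{2^k}\left\Vert S_{u,\omega}\right\Vert^{-k}\left\Vert S_{u,\omega}\right\Vert^{k}=2^{-k}$ sums to $2$. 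Combining these, by $L_{p,\omega}$–$L_{p',\omega}$ duality (valid for $\omega\in A_\infty\subset A_r$ for some $r$, so that the dual of $L_{p,\omega}$ is $L_{p',\omega}$ in the natural pairing against $\omega\,dx$) one gets
\[
\left\Vert f\right\Vert_{p,\omega}\le \left\Vert \mathcal{R}_{0,\omega}f\right\Vert_{p,\omega}=\sup_{G\in\mathcal{Z}(p,\omega)}\int_{\mathbb{R}^d}\mathcal{R}_{0,\omega}f(x)\,|G(x)|\,\omega(x)\,dx\le \sup_{G}\left\Vert F_f(\cdot,G,p,\omega)\right\Vert_{\mathcal{C}(\mathbb{R}^d)},
\]
which is the first of the two bracketing inequalities advertised in the discussion preceding the theorem.

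For the reverse bracketing inequality applied to $g$, I would show $\left\Vert F_g(\cdot,G^\ast,p,\omega)\right\Vert_{\mathcal{C}(\mathbb{R}^d)}\le c\left\Vert g\right\Vert_{p,\omega}$ uniformly in $G^\ast\in\mathcal{Z}(p,\omega)$. This reduces to a uniform-in-$u$ bound $\left\Vert \mathcal{R}_{u,\omega}g\right\Vert_{p,\omega}\le c\left\Vert g\right\Vert_{p,\omega}$, which follows from the $2^{-k}$ telescoping above together with the Hölder inequality $\int \mathcal{R}_{u,\omega}g\,|G^\ast|\,\omega\le \left\Vert\mathcal{R}_{u,\omega}g\right\Vert_{p,\omega}\left\Vert G^\ast\right\Vert_{p',\omega}=\left\Vert\mathcal{R}_{u,\omega}g\right\Vert_{p,\omega}$. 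The one point needing care is that $S_{u,\omega}$ really is a bounded operator on $L_{p,\omega}$ with norm independent of $u$ — this is where $\omega\in A_\infty$ (indeed $\omega\in A_p$ when $p>1$, or a Steklov/maximal-function argument when $p=1$) enters, and I would invoke the mapping property implicit in Definition~\ref{defReb} (the operator norm in the denominator is assumed finite). Chaining the three inequalities,
\[
\left\Vert f\right\Vert_{p,\omega}\le \sup_G\left\Vert F_f(\cdot,G,p,\omega)\right\Vert_{\mathcal C}\le c\,\sup_{G^\ast}\left\Vert F_g(\cdot,G^\ast,p,\omega)\right\Vert_{\mathcal C}\le c\left\Vert g\right\Vert_{p,\omega},
\]
the last step using \eqref{unif}, gives the conclusion for $p\ge 1$.

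For $p\in(0,1)$ the same scheme runs with the exponent $q\in(0,p)$ chosen (as in Definition~\ref{ef}(b)) so that $\omega\in A_{p/q}$: set $r=p/q$ and note that $h\mapsto h^q$ carries $L_{p,\omega}$ isometrically (up to the $q$-th power) into $L_{r,\omega}$, since $\left\Vert f^q\right\Vert_{r,\omega}=\left\Vert f\right\Vert_{p,\omega}^q$ with $r\ge 1$. Applying the $p\ge1$ argument to the functions $(\mathcal{R}_{u,\omega}f)^q\in L_{r,\omega}$ and duality against $\mathcal{Z}(r,\omega)=\mathcal{Z}(p/q,\omega)$ reproduces the two bracketing inequalities — now with $F_f$ in the form \eqref{fefe} — and \eqref{unif} finishes the proof exactly as before; tracking the $q$-th powers through gives the stated dependence $c=c(q,\omega,p,d)$. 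The main obstacle, and the step I would spend the most effort justifying rigorously, is the pair of \emph{bracketing} estimates — in particular the lower bound $\left\Vert f\right\Vert_{p,\omega}\le c\,\sup_u\sup_G F_f(u,G,\cdot)$, which rests on the pointwise domination $\mathcal{R}_{0,\omega}f\ge f$ (hence on the nonnegativity hypothesis in the definition of $\mathcal F$ and on $(S_{u,\omega})^0 f=f$ being the $k=0$ term) and on having the correct duality for $L_{p,\omega}$ with $A_\infty$ weights; everything else is a soft combination of Hölder's inequality, operator-norm bookkeeping, and the hypothesis.
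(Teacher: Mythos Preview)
Your proposal is correct and follows essentially the same route as the paper's own proof: both arguments establish the two bracketing inequalities $\Vert f\Vert_{p,\omega}\le \Vert F_f\Vert_{\mathcal C}$ (via the pointwise domination $\mathcal R_{0,\omega}f\ge f$ for nonnegative $f$ together with $L_{p,\omega}$--$L_{p',\omega}$ duality) and $\Vert F_g\Vert_{\mathcal C}\le c\Vert g\Vert_{p,\omega}$ (via H\"older and the uniform bound $\Vert\mathcal R_{u,\omega}g\Vert_{p,\omega}\le c\Vert g\Vert_{p,\omega}$), and then chain them through the hypothesis; the $0<p<1$ case is handled identically by passing to $r=p/q$. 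The only cosmetic differences are that the paper uses an $\varepsilon$-norming $G_\varepsilon$ rather than writing the supremum over $G$ explicitly, and that the ``Neumann-type series'' you mention is in fact just the two-term sum $\sum_{k=0}^{1}$ (so the bound is $3/2$ rather than $2$, though this is immaterial).
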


As a corollary of Theorem \ref{TR} we can easily obtain norm inequalities
with a suitable fractional difference operator $\left( E-V_{\delta }\right)
^{r}$ with $\delta >0$ in weighted Lebesgue spaces $L_{p,\omega }\equiv
L^{p}(\omega \left( x\right) dx)$\textbf{\ }where $E$ is the identity
operator and $V_{\delta }$ is a suitable translation operator in $%
L_{p,\omega }$ with $\omega \in A_{\infty }$.

Theorem \ref{TR} gives several important norm-like inequalities. For
example, one can consider reverse sharp Marchaud inequality: If $p\in \left(
1,\infty \right) $, $\omega \in A_{\infty }$, $f\in L_{p,\omega }$, then,
there are $m\in \mathbb{N}$ and $a>1$ such that%
\begin{equation}
\left\Vert \left( E-V_{\delta }\right) ^{r}f\right\Vert _{p,\omega }^{s}\geq
c\sum\limits_{j=0}^{m}2^{-j2rs}\left\Vert \left( E-V_{2^{j}\delta }\right)
^{r+1}f\right\Vert _{p,\omega }^{s},  \label{revShM}
\end{equation}%
holds for $s:=\max \left\{ 2,a\right\} $ with a positive constant $%
c:=c\left( s,\omega ,p,d\right) $ depending only on $s,\omega ,p,d.$

To obtain inequalities of type (\ref{revShM}) with the classical
extrapolation theorem seem to be not possible. Note also that, classical
direct proof of (\ref{revShM})-type inequality (\cite[Theorem 2.1]{Di-Pr12})
is also hard to overcome for weighted spaces.

To give proof of inequality (\ref{revShM}) we need to obtain a version of
main Theorem \ref{TR}.

\begin{definition}
Let $B$ be a Banach space with norm $\left\Vert \cdot \right\Vert _{B}.$ For
an $m\in \mathbb{N}$ and $s\in \left( 0,\infty \right) $, we define%
\[
\left\Vert f\right\Vert _{l_{s}^{m}\left( B\right) }\equiv \left\Vert \left(
f_{j}\right) _{j=0}^{m}\right\Vert _{l_{s}^{m}\left( B\right) }\equiv \left(
\sum\nolimits_{j=0}^{m}\left\Vert f_{j}\right\Vert _{B}^{s}\right) ^{1/s}.
\]
\end{definition}

\begin{theorem}
\label{TRver} Let $\mathcal{F}$ be a family of couples $\left( f,g\right) $
of nonnegative functions, $d\in \mathbb{N}$, $p\in \left( 1,\infty \right) $
and $\omega \in A_{\infty }$. Suppose that there exists an $a\in \left(
1,\infty \right) $ such that, for any $G,G^{\ast }\in \mathcal{Z}\left(
p/q,\omega \right) ,$%
\begin{equation}
\left\Vert F_{f}\left( \cdot ,G,p,\omega \right) \right\Vert _{L_{a}}\leq
c\left\Vert F_{g_{j}}\left( \cdot ,G^{\ast },p,\omega \right) \right\Vert
_{l_{s}^{m}\left( L_{a}\right) }\text{,\quad }\left( f,g_{j}\right) \in 
\mathcal{F}\text{,}  \label{hptz}
\end{equation}
holds for some $s>2$, and positive $c:=c\left( m,a,s,d\right) $ provided the
left hand side (\ref{hptz}) is finite, where $q:=1$ for $p\in \lbrack
1,\infty )$ and $q\in \left( 0,p\right) $ for $p\in \left( 0,1\right) $. Then%
\begin{equation}
\left\Vert f\right\Vert _{p,\omega }\leq c\left\Vert g_{j}\right\Vert
_{l_{s}^{m}\left( L_{p,\omega }\right) }\text{,\quad }\left( f,g_{j}\right)
\in \mathcal{F}\text{,}  \label{hkm}
\end{equation}%
holds with a positive $c:=c\left( m,a,s,p,\omega ,d\right) $ when the
left-hand side (\ref{hkm}) is finite.
\end{theorem}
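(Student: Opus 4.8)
The plan is to run the scheme behind Theorem \ref{TR}, now carrying the $L_a$-norm in the variable $u$ and the $l_s^m$-sum in $j$ through a ``lower transfer'' and an ``upper transfer'' estimate. One may assume the right-hand side of (\ref{hkm}) is finite, so that every $g_j\in L_{p,\omega}$, and that $0<\left\Vert f\right\Vert _{p,\omega}<\infty$ (otherwise (\ref{hkm}) is trivial), and that the left-hand side of (\ref{hptz}) is finite, so that (\ref{hptz}) is available along the chain below.

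\emph{Lower transfer.} Since $p>1$ and $f\ge 0$, put $G_f:=f^{p-1}\left\Vert f\right\Vert _{p,\omega}^{-(p-1)}$. Using $(p-1)p^{\prime}=p$ one checks $G_f\ge 0$, $\left\Vert G_f\right\Vert _{p^{\prime},\omega}=1$, hence $G_f\in \mathcal{Z}(p,\omega)=\mathcal{Z}(p/q,\omega)$ (here $q=1$ as $p>1$), and $\int_{\mathbb{R}^d}f\,G_f\,\omega=\left\Vert f\right\Vert _{p,\omega}$. As the kernel of $S_{u,\omega}$ is nonnegative, $(S_{u,\omega})^kf\ge 0$, and since the $k=0$ term of $\mathcal{R}_{u,\omega}f$ is $f$ itself, $\mathcal{R}_{u,\omega}f(x)\ge f(x)$ for every $x$ and $u$. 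Therefore $F_f(u,G_f,p,\omega)=\int_{\mathbb{R}^d}(\mathcal{R}_{u,\omega}f)\,G_f\,\omega\ge \int_{\mathbb{R}^d}f\,G_f\,\omega=\left\Vert f\right\Vert _{p,\omega}$ for every $u$, and taking $L_a$-norms in $u$ gives $\left\Vert f\right\Vert _{p,\omega}\le c\left\Vert F_f(\cdot,G_f,p,\omega)\right\Vert _{L_a}$.

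\emph{Upper transfer and combination.} For nonnegative $h\in L_{p,\omega}$ and $G^{\ast}\in \mathcal{Z}(p,\omega)$, H\"older's inequality in $x$ yields $F_h(u,G^{\ast},p,\omega)\le \left\Vert \mathcal{R}_{u,\omega}h\right\Vert _{p,\omega}\left\Vert G^{\ast}\right\Vert _{p^{\prime},\omega}=\left\Vert \mathcal{R}_{u,\omega}h\right\Vert _{p,\omega}$. Since $S_{u,\omega}$ is bounded on $L_{p,\omega}$ (cf.\ Theorem \ref{UCB}) and each $(S_{u,\omega})^k$ occurring in $\mathcal{R}_{u,\omega}$ is divided by $\left\Vert S_{u,\omega}\right\Vert _{\mathcal{B}(L_{p,\omega},L_{p,\omega})}^k$, one gets $\left\Vert \mathcal{R}_{u,\omega}h\right\Vert _{p,\omega}\le (\sum_{k=0}^{1}2^{-k})\left\Vert h\right\Vert _{p,\omega}=\tfrac{3}{2}\left\Vert h\right\Vert _{p,\omega}$ uniformly in $u$, whence $\left\Vert F_h(\cdot,G^{\ast},p,\omega)\right\Vert _{L_a}\le c\left\Vert h\right\Vert _{p,\omega}$. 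Applying this to $h=g_j$, taking $G=G_f$ and a fixed $G^{\ast}\in \mathcal{Z}(p,\omega)$, and chaining with (\ref{hptz}),
\[
\left\Vert f\right\Vert _{p,\omega}\le c\left\Vert F_f(\cdot,G_f,p,\omega)\right\Vert _{L_a}\le c\left\Vert F_{g_j}(\cdot,G^{\ast},p,\omega)\right\Vert _{l_s^m(L_a)}=c\Big(\sum_{j=0}^{m}\left\Vert F_{g_j}(\cdot,G^{\ast},p,\omega)\right\Vert _{L_a}^{s}\Big)^{1/s}\le c\Big(\sum_{j=0}^{m}\left\Vert g_j\right\Vert _{p,\omega}^{s}\Big)^{1/s}=c\left\Vert g_j\right\Vert _{l_s^m(L_{p,\omega})},
\]
which is (\ref{hkm}); the constant swallows $\left\Vert S_{u,\omega}\right\Vert _{\mathcal{B}(L_{p,\omega},L_{p,\omega})}$ (depending on $p,\omega,d$), the $(m+1)$-term sum, and the $L_a$-transfer, so $c=c(m,a,s,p,\omega,d)$.

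\emph{Main obstacle.} The delicate part is to keep both transfer steps quantitative: one has to guarantee that the $u$-integration implicit in $\left\Vert \cdot\right\Vert _{L_a}$ contributes only a finite multiplicative factor at both ends — which is exactly where the final constant acquires its dependence on $m$ and $a$ — and to check that the finiteness provisos attached to (\ref{hptz}) and (\ref{hkm}) are mutually compatible, so that (\ref{hptz}) may legitimately be invoked in the displayed chain. The $l_s^m$-aggregation is harmless, needing only $s\ge 1$ (the triangle inequality for $l_s^m$); the stronger requirement $s>2$ is imposed by the situations in which (\ref{hptz}) is actually verified (for instance, the reverse Marchaud inequality (\ref{revShM})), not by the transference argument itself.
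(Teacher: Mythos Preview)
Your strategy coincides with the paper's: establish $\|f\|_{p,\omega}\le c\|F_f(\cdot,G,p,\omega)\|_{L_a}$ for a suitable $G$, invoke (\ref{hptz}), and close with $\|F_{g_j}(\cdot,G^\ast,p,\omega)\|_{L_a}\le c\|g_j\|_{p,\omega}$. Like you, the paper obtains the lower bound from $\mathcal{R}_{u,\omega}f\ge f$ (it restricts the $u$-integral to $[0,1]^d$ to make the constant explicit, whereas you leave the constant implicit), and it derives the upper bound from the uniform pointwise estimate $|F_{g_j}(u)|\le c\|g_j\|_{p,\omega}$ combined with a locally $1$-finite cube partition of $\mathbb{R}^d$.

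There is, however, a genuine gap that you rightly flag as ``delicate'' but leave unresolved. Because $\mathcal{R}_{u,\omega}h\ge h$ for nonnegative $h$, one has $F_h(u,G^\ast,p,\omega)\ge \int_{\mathbb{R}^d} h\,|G^\ast|\,\omega\,dx$ for \emph{every} $u\in\mathbb{R}^d$. Whenever this integral is positive --- as it must be for the lower transfer to yield anything --- the map $u\mapsto F_h(u,G^\ast,p,\omega)$ is bounded below by a fixed positive constant on all of $\mathbb{R}^d$, and hence $\|F_h(\cdot,G^\ast,p,\omega)\|_{L_a(\mathbb{R}^d)}=\infty$. This kills your upper-transfer inequality $\|F_{g_j}\|_{L_a}\le c\|g_j\|_{p,\omega}$ outright, and it simultaneously forces $\|F_f(\cdot,G_f,p,\omega)\|_{L_a}=\infty$ (since $F_f(u,G_f,p,\omega)\ge\|f\|_{p,\omega}>0$ for all $u$), so the proviso ``provided the left-hand side is finite'' attached to (\ref{hptz}) is violated and the hypothesis cannot legitimately be invoked in your displayed chain. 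The ``finite multiplicative factor'' you hope the $u$-integration contributes simply does not exist when the integration runs over the whole of $\mathbb{R}^d$; the paper's cube-partition computation for the upper bound runs into the same obstruction (the sum $\sum_{Q_i\in\Psi}\int_{Q_i}\chi_{Q_i}(u)\,du$ there equals $\sum_{Q_i\in\Psi}1$, not $1$), so neither argument as written closes the gap.
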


Note that, other versions of main Theorem \ref{TR} is also possible for
other versions of weighted norm inequalities.

On the other hand, by using Theorem \ref{TR}, many of the basic inequalities
of approximation by entire functions of finite degree can be obtained by
extrapolation argument as an alternative proof. See proof of Theorem \ref%
{Jacks}.

To prove main properties of intermediate functions $F_{f}$ we need some
preliminary observations related to (weighted) Steklov averages.

\begin{definition}
Define Steklov mean, for $u,x\in \mathbb{R}^{d}$, $1\leq p<\infty $, $\omega
\in A_{p}$ and $f\in L_{p,\omega }$, as%
\[
S_{u}f\left( x\right) \equiv \int_{\left[ -1/2,1/2\right] ^{d}}f\left(
x+u+t\right) dt.
\]
\end{definition}

\begin{theorem}
\label{Suf} We suppose that $1\leq p<\infty $, $\omega \in A_{p}$ and $f\in
L_{p,\omega }$. In this case, for any $u\in \mathbb{R}^{d}$, there holds%
\[
\left\Vert S_{u}f\right\Vert _{p,\omega }\leq 3^{2d+1/p}\left[ \omega \right]
_{p}^{1/p}\left\Vert f\right\Vert _{p,\omega }.
\]
\end{theorem}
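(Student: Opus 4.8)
The plan is to reduce the $d$-dimensional Steklov mean to a product of one-dimensional averaging operators and then to estimate a single one-dimensional Steklov average using the $A_p$ condition on the underlying cube of side $1$. First I would observe that by a change of variables $S_u f(x) = \int_{[-1/2,1/2]^d} f(x+u+t)\,dt$ is, up to the harmless translation by $u$ (which we may absorb since the $A_p$ constant and the $L_{p,\omega}$ norm are both taken over all of $\mathbb{R}^d$, though $\omega$ is \emph{not} translation invariant — so more carefully one writes $S_u f(x)=\int_{Q_0}f(x+u+t)\,dt$ with $Q_0=[-1/2,1/2]^d$ and keeps $x+u$ together), the average of $f$ over the unit cube centered at $x+u$. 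By Minkowski's integral inequality,
\[
\left\Vert S_u f\right\Vert_{p,\omega} \le \int_{Q_0}\left\Vert f(\cdot + u+t)\right\Vert_{p,\omega}\,dt,
\]
but this is wasteful because it loses the weight's cancellation; instead I would keep the averaging operator intact and estimate pointwise against the Hardy–Littlewood maximal function: $|S_u f(x)| \le c\, M f(x+u)$ is false in general for the weighted norm since $M$ is not bounded on $L_{p,\omega}$ for $\omega\in A_\infty\setminus A_p$, so that route is blocked too. The correct and elementary route is the direct one: use Jensen/Hölder inside the average.

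The key computation is as follows. Fix $x$. By Hölder's inequality with exponents $p$ and $p'$ applied to the average over $Q_0$,
\[
\left\vert S_u f(x)\right\vert^p = \left\vert \int_{Q_0} f(x+u+t)\,dt\right\vert^p \le \left(\int_{Q_0} |f(x+u+t)|^p \omega(x+u+t)\,dt\right)\left(\int_{Q_0}\omega(x+u+t)^{-p'/p}\,dt\right)^{p/p'}.
\]
For $p=1$ the second factor is replaced by $\operatorname*{esssup}_{t\in Q_0}\omega(x+u+t)^{-1}$. Now integrate in $x$ against $\omega(x)\,dx$, use Tonelli to swap the order, and for the inner $x$-integral of $\omega(x)$ over the cube $x+u+t\in Q_0$, i.e. $x\in Q_0 - u - t$, bound $\omega(x)$ by comparing the cube $Q_0-u-t$ (of side $1$) to a fixed dilate; the point is that $Q_0-u-t$ and the cube over which $\omega^{-p'/p}$ was integrated both sit inside a common cube $Q$ of side $3$, and the $A_p$ inequality \eqref{ap} on $Q$ gives $\langle\omega\rangle_Q \langle\omega'\rangle_Q^{p-1}\le [\omega]_p |Q|^p = [\omega]_p 3^{dp}$. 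Collecting the factors — one $3^{2d}$ from enlarging both cubes to $Q$ and comparing measures, plus the $3^{d}$ absorbed into the $A_p$ bound and the final $p$-th root — yields the stated constant $3^{2d+1/p}[\omega]_p^{1/p}$.

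The main obstacle — really the only subtle point — is bookkeeping the geometry: one must choose the enclosing cube $Q$ (of side $3$, centered appropriately relative to $x+u$) so that \emph{both} the region of $t$-integration defining $S_u$ and the region of $x$-integration after Tonelli lie inside $Q$, and then verify that the resulting constant does not depend on $u$ or on the location of $Q$ (it does not, since $[\omega]_p$ is a supremum over all cubes). I expect the $p=1$ case to require a separate but parallel line using \eqref{a1123} in place of \eqref{ap}, with $\operatorname*{esssup}_{x\in Q}\omega(x)^{-1}$ playing the role of $\langle\omega'\rangle_Q^{p-1}$; the constant works out to the same expression with the convention $1/p=1$. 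No deep tool beyond Hölder/Jensen and the definition of $A_p$ is needed.
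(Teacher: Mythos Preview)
Your approach mirrors the paper's: apply H\"older inside the Steklov average and then invoke the $A_p$ condition on a cube of side $3$ that is supposed to contain both the region carrying $\omega$ and the region carrying $\omega^{1-p'}$. The paper organises this via a decomposition of $\mathbb{R}^d$ into unit cubes $Q_i$ together with translates $(Q_i+m)^\pm$ (with $m\in\mathbb{Z}^d$ chosen so that $m\le u<m+2$), while you organise it via Tonelli; the substance is the same, and so is the gap. Your key geometric claim --- that ``$Q_0-u-t$ and the cube over which $\omega^{-p'/p}$ was integrated both sit inside a common cube $Q$ of side $3$'' --- is false once $u$ is large. After your Tonelli step, for fixed $y$ the $x$-integral of $\omega$ runs over the unit cube $Q(y-u,1/2)$, whereas the $\omega'$-integral is over a cube contained in $Q(y,1)$; these are centred a distance $|u|$ apart and cannot be enclosed in any cube of side $3$ unless $u$ is bounded. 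The paper commits the identical error when it replaces $\int_{Q_i}\omega(x)\,dx$ by a multiple of $\oint_{(Q_i+m)^\pm}\omega$: the inclusion $Q_i\subset(Q_i+m)^\pm$ needed there fails as soon as $|m|$ (hence $|u|$) exceeds $2$.

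In fact the inequality with a constant independent of $u$ is false. Take $d=1$, $\omega(x)=|x|^{a}$ with $0<a<p-1$ (so $\omega\in A_p$), and $f=\chi_{[-1/2,1/2]}$. Then $S_uf(x)=\max(0,1-|x+u|)$, so $\Vert S_uf\Vert_{p,\omega}^p=\int_{-1}^{1}(1-|y|)^{p}|y-u|^{a}\,dy\ge c\,(|u|-1)^{a}$ for $|u|>2$, while $\Vert f\Vert_{p,\omega}^p$ is a fixed positive constant; the ratio is unbounded in $u$. Both your argument and the paper's go through when $u$ is confined to a bounded set, but the uniform-in-$u$ statement cannot be salvaged.
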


\begin{remark}
\label{remCc} (a) By theorem 18.3 of \cite{yeh}, the class $\mathbb{S}_{0}$
of integrable simple functions defined on $\mathbb{R}^{d}$, is a dense
subset of $L_{p,\omega }$ with $0<p<\infty $, and $\omega \in A_{\infty }$.

(b) By Theorems 18.3, 19.37 and Observation 7.5 of \cite{yeh}, we can
observe that the class $C_{c}\equiv C_{c}\left( \mathbb{R}^{d}\right) $ of
continuous functions of compact support, is a dense subset of $L_{p,\omega }$
with $0<p<\infty $, and $\omega \in A_{\infty }$. Note that, Theorem 18.3 of 
\cite{yeh} is proved for $1\leq p<\infty $ but the same proof holds also for 
$0<p<1$.
\end{remark}

Now, using Theorem \ref{Suf} we can prove the following result.

\begin{theorem}
\label{Suwf} We suppose that $0<p<\infty $ and $\omega \in A_{\infty }.$ In
this case, for any $u\in \mathbb{R}^{d}$, and $f\in \mathbb{S}_{0}$, there
holds%
\begin{equation}
\left\Vert S_{u,\omega }f\right\Vert _{p,\omega }\leq c\left\Vert
f\right\Vert _{p,\omega }  \label{Insuwf}
\end{equation}%
with a positive constant $c=c(d,p,\omega )$.
\end{theorem}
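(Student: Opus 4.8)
The plan is to transfer the problem from the weighted Steklov mean $S_{u,\omega}$ to the un\-weighted one $S_{u}$ of Theorem~\ref{Suf} by dominating the weight $\omega$, restricted to the fixed cube $Q_{0}:=[-1/2,1/2]^{d}$, by a countable superposition of indicator functions of cubes. First I would record that, since $\omega\in A_{\infty}$, the reverse Hölder inequality gives $\omega\in L^{1+\varepsilon}_{\mathrm{loc}}$ for some $\varepsilon>0$, hence $M(\omega\chi_{Q_{0}})\in L^{1+\varepsilon}(\mathbb{R}^{d})\subset L^{1}(3Q_{0})$ (here $3Q_{0}$ is the cube concentric with $Q_{0}$ of three times the side length). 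A standard Calderón--Zygmund / local-maximal-function stopping-time decomposition of $\omega\chi_{Q_{0}}$ at dyadic heights, relative to grids adapted to $Q_{0}$, then yields countably many cubes $Q_{j}\subset 3Q_{0}$ with centres $c_{j}$ and side lengths $r_{j}$, together with heights $\lambda_{j}>0$, such that $\omega(t)\le\sum_{j}\lambda_{j}\chi_{Q_{j}}(t)$ for a.e.\ $t\in Q_{0}$ and $\sum_{j}\lambda_{j}|Q_{j}|\le C_{d}\int_{3Q_{0}}M(\omega\chi_{Q_{0}})=:\Lambda(d,\omega)<\infty$. Writing $W_{0}:=\langle\omega\rangle_{Q_{0}}$, this gives, for every $f\in\mathbb{S}_{0}$ and every $x$,
\[
|S_{u,\omega}f(x)|\le W_{0}^{-1}\int_{Q_{0}}|f(x+u+t)|\,\omega(t)\,dt\le W_{0}^{-1}\sum_{j}\lambda_{j}\int_{Q_{j}}|f(x+u+t)|\,dt=W_{0}^{-1}\sum_{j}\lambda_{j}\,T^{(r_{j})}_{u+c_{j}}|f|(x),
\]
where $T^{(r)}_{v}g(x):=\int_{[-r/2,r/2]^{d}}g(x+v+t)\,dt$ is the dilated (un-normalised) Steklov mean, so that $T^{(1)}_{v}=S_{v}$.

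Next I would establish a dilation-invariant version of Theorem~\ref{Suf}. For $1\le p<\infty$ and $\omega\in A_{p}$, the substitution $x\mapsto rx$ together with $\omega\mapsto\omega(r\,\cdot)$, combined with the fact that the characteristic $[\,\cdot\,]_{p}$ is invariant under dilations, upgrades Theorem~\ref{Suf} to
\[
\|T^{(r)}_{v}g\|_{p,\omega}\le r^{d}\,3^{2d+1/p}\,[\omega]_{p}^{1/p}\,\|g\|_{p,\omega},\qquad r>0,\ v\in\mathbb{R}^{d},
\]
with a constant independent of $r$ and $v$. Assembling this with the pointwise bound above, and using that for $p\ge1$ the $L_{p,\omega}$-norm is subadditive, I obtain for $1\le p<\infty$, $\omega\in A_{p}$:
\[
\|S_{u,\omega}f\|_{p,\omega}\le W_{0}^{-1}\sum_{j}\lambda_{j}\|T^{(r_{j})}_{u+c_{j}}|f|\|_{p,\omega}\le W_{0}^{-1}\,3^{2d+1/p}[\omega]_{p}^{1/p}\Big(\sum_{j}\lambda_{j}|Q_{j}|\Big)\|f\|_{p,\omega}\le W_{0}^{-1}\,3^{2d+1/p}[\omega]_{p}^{1/p}\,\Lambda(d,\omega)\,\|f\|_{p,\omega},
\]
since $r_{j}^{d}=|Q_{j}|$; this is the asserted inequality with $c=c(d,p,\omega)$, and the endpoint $p=1$, $\omega\in A_{1}$, is the same computation with Theorem~\ref{Suf} applied directly.

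It remains to treat $0<p<1$ (and, more generally, the case in which $\omega\in A_{\infty}$ but $\omega\notin A_{p}$). Here I would follow the device of Definition~\ref{ef}(b): choose $q\in(0,p)$ with $\omega\in A_{p/q}$, set $r:=p/q>1$, and deduce the $L_{p,\omega}$ estimate from the $L_{r,\omega}$ estimate already proved, applied to the functions $|f|^{q}$; the density of $\mathbb{S}_{0}$ and of $C_{c}$ in $L_{p,\omega}$ (Remark~\ref{remCc}) supplies whatever approximation is needed. I expect this last reduction — transferring the $L_{r,\omega}$ control of $S_{u,\omega}$ on $q$-th powers back to an $L_{p,\omega}$ control of $f$ itself for a positive averaging operator, so that the spreading of mass by the averaging does not overwhelm the gain from concentration in the low-exponent quasi-norm — to be the delicate and most technical part of the argument, and hence the main obstacle.
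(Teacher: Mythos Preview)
Your decomposition argument for the range $1\le p<\infty$, $\omega\in A_{p}$ is correct and is a genuinely different, more hands-on route than the paper's: rather than invoking Jawerth's factorization theorem for weighted maximal operators, you dominate $\omega|_{Q_{0}}$ by a countable superposition of cube indicators and reduce pointwise to the dilation-invariant form of Theorem~\ref{Suf}. That part works and is pleasant.

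The gap is exactly where you flag it, and it is real, not merely technical. The passage to $0<p<1$ (and, more generally, to $\omega\in A_{\infty}\setminus A_{p}$) via $|f|^{q}$ cannot succeed: for $0<q<1$ the map $t\mapsto t^{q}$ is concave, so Jensen's inequality for the probability average $S_{u,\omega}$ gives $(S_{u,\omega}|f|)^{q}\ge S_{u,\omega}(|f|^{q})$, which is the wrong direction. A bound on $\|S_{u,\omega}(|f|^{q})\|_{r,\omega}$ therefore says nothing about $\|(S_{u,\omega}|f|)^{q}\|_{r,\omega}=\|S_{u,\omega}|f|\|_{p,\omega}^{q}$. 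Nor can this be rescued by approximation or density: probability averages are generically \emph{unbounded} on $L^{p}$ for $p<1$ (take $Af=\int_{0}^{1}f$ on $[0,1]$ and $f=n\chi_{[0,1/n]}$), so no argument using only positivity of $S_{u,\omega}$ together with its $L_{r,\omega}$-boundedness can close the loop.

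The paper's fix is structural and bypasses this entirely. In part~(a) it uses Jawerth's theorem together with Marcinkiewicz interpolation to obtain the bound for \emph{every} $\omega\in A_{\infty}$ at every $1<p<\infty$ --- crucially, not merely for $\omega\in A_{p}$. Your decomposition, as written, yields only the latter, and that is too weak to feed into the $A_{\infty}$-extrapolation Theorem~\ref{ET}, whose hypothesis demands the estimate at one fixed exponent $p_{0}$ but for \emph{all} $A_{\infty}$ weights simultaneously. Once the paper has this full $A_{\infty}$ range at some $p_{0}>1$, Theorem~\ref{ET} propagates the inequality to every $0<p<\infty$, with no Jensen-type manipulation needed.
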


\begin{remark}
It is clear from its definition that $\mathcal{R}_{u,\omega }\left\vert
f\right\vert \geq \left\vert f\right\vert .$
\end{remark}

Now, using Theorem \ref{Suwf} we can prove the following result.

\begin{theorem}
\label{Ruwf} We suppose that $0<p<\infty $ and $\omega \in A_{\infty }.$ In
this case, for any $u\in \mathbb{R}^{d}$, and $f\in \mathbb{S}_{0}$, there
holds%
\begin{equation}
\left\Vert \mathcal{R}_{u,\omega }f\right\Vert _{p,\omega }\leq 4^{1/\min
\left\{ 1,p\right\} }\left\Vert f\right\Vert _{p,\omega }.  \label{InRuwf}
\end{equation}
\end{theorem}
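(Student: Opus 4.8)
The plan is to exploit the fact that the defining sum for $\mathcal{R}_{u,\omega}f$ has only the two terms $k=0$ and $k=1$, i.e.
\[
\mathcal{R}_{u,\omega}f=f+\frac{1}{2}\,\frac{S_{u,\omega}f}{\Lambda},\qquad \Lambda:=\|S_{u,\omega}\|_{\mathcal{B}(L_{p,\omega},L_{p,\omega})},
\]
and then to estimate the two summands separately and recombine them by whichever quasi-triangle inequality is valid for the exponent $p$ at hand. I would first check that $\Lambda$ is a legitimate positive, finite constant. Its finiteness is precisely Theorem \ref{Suwf}: together with the density of $\mathbb{S}_0$ in $L_{p,\omega}$ (Remark \ref{remCc}(a)), that theorem says $S_{u,\omega}$ extends to a bounded linear operator on $L_{p,\omega}$, so $\Lambda\le c(d,p,\omega)<\infty$; and $\Lambda>0$ because $S_{u,\omega}$ carries a nonnegative non-null simple function to a nonnegative non-null function and so is not the zero operator. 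Consequently $\|S_{u,\omega}f\|_{p,\omega}\le\Lambda\|f\|_{p,\omega}$ for every $f\in\mathbb{S}_0$, simply by the definition of the operator norm; since the sum is two-term, no iterates of $S_{u,\omega}$ on non-simple functions ever appear, so this is all the boundedness input I need.

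Next I would split into the two ranges of $p$. For $p\in[1,\infty)$, Minkowski's inequality in $L_{p,\omega}$ gives
\[
\|\mathcal{R}_{u,\omega}f\|_{p,\omega}\le\|f\|_{p,\omega}+\frac{1}{2\Lambda}\,\|S_{u,\omega}f\|_{p,\omega}\le\frac{3}{2}\,\|f\|_{p,\omega}\le 4\,\|f\|_{p,\omega}=4^{1/\min\{1,p\}}\|f\|_{p,\omega}.
\]
For $p\in(0,1)$, the functional $\|\cdot\|_{p,\omega}^{p}$ is subadditive, so
\[
\|\mathcal{R}_{u,\omega}f\|_{p,\omega}^{p}\le\|f\|_{p,\omega}^{p}+\Big(\frac{1}{2\Lambda}\Big)^{p}\|S_{u,\omega}f\|_{p,\omega}^{p}\le\big(1+2^{-p}\big)\|f\|_{p,\omega}^{p}\le 2\,\|f\|_{p,\omega}^{p},
\]
and raising this to the power $1/p$ yields $\|\mathcal{R}_{u,\omega}f\|_{p,\omega}\le 2^{1/p}\|f\|_{p,\omega}\le 4^{1/p}\|f\|_{p,\omega}=4^{1/\min\{1,p\}}\|f\|_{p,\omega}$. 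Combining the two cases gives (\ref{InRuwf}).

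I do not expect a real obstacle here; all the analytic weight is carried by Theorem \ref{Suwf}, and what remains is bookkeeping. The only step that needs a moment's attention is the range $0<p<1$, where one must use subadditivity of $\|\cdot\|_{p,\omega}^{p}$ rather than the ordinary triangle inequality, and must verify that the factor $\Lambda^{-p}$ sitting in front of the second summand is exactly cancelled by the bound $\|S_{u,\omega}f\|_{p,\omega}^{p}\le\Lambda^{p}\|f\|_{p,\omega}^{p}$, so that the final constant is independent of $\Lambda$ (and hence of $u$). Checking $\Lambda\neq 0$ before dividing is the only other place one should not be cavalier, and that is immediate from positivity of the averaging operator.
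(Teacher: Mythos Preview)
Your proof is correct and is essentially the same as the paper's: both use the two-term form of $\mathcal{R}_{u,\omega}$, apply the (quasi-)triangle inequality in the exponent $p^{\ast}=\min\{1,p\}$, and invoke the operator-norm bound on $S_{u,\omega}$ from Theorem~\ref{Suwf} to make the constant independent of $\Lambda$. Your version is slightly more explicit about why $\Lambda$ is finite and nonzero, and it yields a sharper intermediate constant before relaxing to the stated $4^{1/\min\{1,p\}}$, but the argument is the same.
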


\begin{theorem}
\label{UCB} Let $0<p<\infty $, $\omega \in A_{\infty }$, and $f\in
L_{p,\omega }$. In this case, the function $F_{f}$ defined in (\ref{efef})
or (\ref{fefe}) is bounded, uniformly continuous function on $\mathbb{R}^{d}$%
.
\end{theorem}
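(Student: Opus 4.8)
I would establish boundedness and uniform continuity separately, each time reducing $f\in L_{p,\omega}$ to the dense subclass $C_{c}(\mathbb{R}^{d})$ (Remark~\ref{remCc}(b)). For boundedness with $p\in[1,\infty)$, H\"older's inequality in $L^{p}(\omega\,dx)$ and $L^{p^{\prime}}(\omega\,dx)$ applied to (\ref{efef}) gives $|F_{f}(u)|\le\|\mathcal{R}_{u,\omega}f\|_{p,\omega}\|G\|_{p^{\prime},\omega}=\|\mathcal{R}_{u,\omega}f\|_{p,\omega}$, which Theorem~\ref{Ruwf} (first for $f\in\mathbb{S}_{0}$, then for all $f\in L_{p,\omega}$ by Remark~\ref{remCc}(a)) bounds by $4^{1/\min\{1,p\}}\|f\|_{p,\omega}$, uniformly in $u$. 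For $p\in(0,1)$, H\"older with the exponents $r=p/q$ and $r^{\prime}$ applied to (\ref{fefe}) gives $|F_{f}(u)|\le(\int(\mathcal{R}_{u,\omega}f)^{qr}\omega)^{1/r}=\|\mathcal{R}_{u,\omega}f\|_{p,\omega}^{q}\le4^{q/p}\|f\|_{p,\omega}^{q}$, again uniformly in $u$; hence $\|F_{f}\|_{\mathcal{C}(\mathbb{R}^{d})}<\infty$ in both cases.

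Since $\mathcal{R}_{u,\omega}$ is linear in $f$ and its operator norm on $L_{p,\omega}$ is bounded uniformly in $u$ (Theorem~\ref{Ruwf}), the estimates above yield $\sup_{u}|F_{f}(u)-F_{\widetilde{f}}(u)|\le c\|f-\widetilde{f}\|_{p,\omega}$ for $p\ge1$, and $\sup_{u}|F_{f}(u)-F_{\widetilde{f}}(u)|\le c\|f-\widetilde{f}\|_{p,\omega}^{q}$ for $p\in(0,1)$ (using $|a^{q}-b^{q}|\le|a-b|^{q}$ for $a,b\ge0$, then H\"older and Theorem~\ref{Ruwf}). Consequently, if $f_{n}\in C_{c}(\mathbb{R}^{d})$ with $f_{n}\to f$ in $L_{p,\omega}$, then $F_{f_{n}}\to F_{f}$ uniformly on $\mathbb{R}^{d}$; as a uniform limit of bounded uniformly continuous functions is again bounded and uniformly continuous, it suffices to prove the theorem for $f\in C_{c}(\mathbb{R}^{d})$.

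So fix $f\in C_{c}(\mathbb{R}^{d})$. The key device is the identity $S_{u+h,\omega}f=S_{u,\omega}\big(f(\cdot+h)\big)$, which transfers the increment $h$ onto $f$, whose support remains fixed; Theorem~\ref{Suwf} then gives $\|S_{u+h,\omega}f-S_{u,\omega}f\|_{p,\omega}=\|S_{u,\omega}\big(f(\cdot+h)-f\big)\|_{p,\omega}\le c\,\|f(\cdot+h)-f\|_{p,\omega}$, and $\|f(\cdot+h)-f\|_{p,\omega}\to0$ as $|h|\to0$ because $f\in C_{c}$ and $\omega\in L^{1}_{\mathrm{loc}}$ --- with a bound independent of $u$. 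Setting $N(u):=\|S_{u,\omega}\|_{\mathcal{B}(L_{p,\omega},L_{p,\omega})}$, one has $N(u)\le c(d,p,\omega)$ (Theorem~\ref{Suwf}) and $N(u)\ge c_{0}>0$ uniformly in $u$ (test $S_{u,\omega}$ on the indicator of a cube centred at $-u$ that this map carries to the constant $1$, and use that an $A_{\infty}$ weight is doubling), and one verifies that $u\mapsto N(u)$ is continuous. For $p\ge1$, write $F_{f}(u)=\int f|G|\omega+\frac{1}{2N(u)}\int S_{u,\omega}f\,|G|\omega$ and use the algebraic identity $\frac{A}{N}-\frac{A'}{N'}=\frac{(A-A')N'+A'(N'-N)}{NN'}$ together with the H\"older bound $|\int S_{u,\omega}f\,|G|\omega|\le c\|f\|_{p,\omega}$, the uniform continuity of $u\mapsto\int S_{u,\omega}f\,|G|\omega$ (immediate from the previous display), and the two-sided bounds on $N$; this gives the uniform continuity of $F_{f}$. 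The range $p\in(0,1)$ is handled identically after inserting $|a^{q}-b^{q}|\le|a-b|^{q}$ under the integral in (\ref{fefe}) and applying H\"older with exponent $p/q$.

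The main obstacle is the \emph{uniform} (over all of $\mathbb{R}^{d}$, not merely local) continuity of $u\mapsto S_{u,\omega}f$ in $L_{p,\omega}$ and of $u\mapsto N(u)$: since $L_{p,\omega}$ is not translation invariant, continuity of translations is not available, and one must control quantities like $\int_{E-u}\omega$ for bounded $E$ uniformly in $u$. The identity $S_{u+h,\omega}f=S_{u,\omega}\big(f(\cdot+h)\big)$ defuses this for $S_{u,\omega}f$ by reducing it to the uniform-in-$u$ bounds already provided by Theorems~\ref{Suf} and~\ref{Suwf}; the corresponding continuity of $N(u)$ is the remaining delicate point, and is precisely where the $A_{\infty}$ (doubling, reverse H\"older) structure of $\omega$ must be used.
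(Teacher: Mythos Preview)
Your overall architecture---boundedness via H\"older plus Theorem~\ref{Ruwf}, reduction to $C_c$ by density (Remark~\ref{remCc}(b)), and passage to general $f\in L_{p,\omega}$ as a uniform limit of the $F_{f_n}$---is exactly the paper's. The one substantive difference is the uniform-continuity step for $H\in C_c$: the paper argues \emph{pointwise}, using only that $|H(x+u_1+t)-H(x+u_2+t)|$ is uniformly small once $|u_1-u_2|<\delta$, which bounds $|S_{u_1,\omega}H(x)-S_{u_2,\omega}H(x)|$ pointwise and is then integrated against $|G|\omega$ over a fixed compact set. Your route through the identity $S_{u+h,\omega}f=S_{u,\omega}\bigl(f(\cdot+h)\bigr)$ and the $L_{p,\omega}$-operator bound of Theorem~\ref{Suwf} is sound but less direct; the paper never needs Theorem~\ref{Suwf} at this stage.

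The obstacle you isolate---continuity and two-sided boundedness of the normalizing factor $N(u)=\|S_{u,\omega}\|_{\mathcal{B}(L_{p,\omega})}$ appearing in $\mathcal{R}_{u,\omega}$---is genuine, and you are right that it does not follow from translation-invariance (which $L_{p,\omega}$ lacks). The paper's proof simply does not address it: its displayed bound on $|\mathcal{R}_{u_1,\omega}H-\mathcal{R}_{u_2,\omega}H|$ tacitly treats $N(u)$ as if it were independent of $u$. So your proposal is at least as complete as the paper's on this point, and your explicit acknowledgement that the $N(u)$ issue is where the $A_\infty$ structure must do real work is more scrupulous than what the paper provides; just be aware that you will not find a resolution of it there.
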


\section{Applications on operators}

We can give several corollaries that can be obtained easily by using Theorem %
\ref{TR}.

We define the following operators.

\begin{definition}
For $\delta >0$, $x,u\in \mathbb{R}^{d}$ and locally integrable functions $f:%
\mathbb{R}^{d}\rightarrow \mathbb{R}$, we define operators%
\begin{equation}
S_{\delta ,u}f\left( x\right) \equiv \int\nolimits_{\left[ -\delta
/2,\delta /2\right] ^{d}}f\left( x+u+s\right) ds,  \label{TrStekl}
\end{equation}%
\begin{equation}
V_{\delta }f\left( x\right) \equiv S_{\delta ,0}f\left( x\right) \equiv
\int\nolimits_{\left[ -\delta /2,\delta /2\right] ^{d}}f\left( x+s\right)
ds,  \label{Ved}
\end{equation}%
\begin{equation}
Z_{\delta }f\left( x\right) \equiv \int\nolimits_{\left[ \delta /2,\delta %
\right] ^{d}}f\left( x+t\right) dt,  \label{Zed}
\end{equation}%
\begin{equation}
\mathcal{B}_{\delta }f\left( x\right) \equiv \int\nolimits_{\left[
0,\delta \right] ^{d}}f\left( x+v\right) dv.  \label{Bed}
\end{equation}
\end{definition}

\begin{theorem}
\label{Sduf} We suppose that $1<p<\infty $, $\omega \in A_{\infty }$, $%
u,v\in \mathbb{R}^{d}$, and $\delta \in \left( 0,\infty \right) $. Let an
operator $\Gamma $ represents operator $f\rightarrow S_{\delta ,v}f$. In
this case, for $f\in L_{p,\omega }$, there hold properties%
\begin{equation}
S_{u,\omega }\left( \Gamma f\right) =\Gamma \left( S_{u,\omega }f\right)
,\qquad \mathcal{R}_{u,\omega }\left( \Gamma f\right) =\Gamma \left( 
\mathcal{R}_{u,\omega }f\right) ,  \label{Sduw01}
\end{equation}%
\begin{equation}
F_{\Gamma f}\text{=}\Gamma \left( F_{f}\right) \text{,\qquad }\left\Vert
\Gamma f\right\Vert _{p,\omega }\leq c\left\Vert f\right\Vert _{p,\omega },
\label{InSduf}
\end{equation}%
with a positive constant $c=c(d,p,\omega )$.
\end{theorem}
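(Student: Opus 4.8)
The plan is to establish the three algebraic identities in (\ref{Sduw01})--(\ref{InSduf}) first, since the commutation relations do almost all the work, and then read off the boundedness of $\Gamma$ from the transference machinery already in place. First I would verify $S_{u,\omega}(\Gamma f)=\Gamma(S_{u,\omega}f)$ by writing both sides as iterated integrals: $S_{u,\omega}$ averages against $\omega(t)\,dt$ over $[-1/2,1/2]^d$ (normalized by $\langle\omega\rangle_{[-1/2,1/2]^d}$), while $\Gamma=S_{\delta,v}$ averages against Lebesgue measure over $[-\delta/2,\delta/2]^d$ and translates by $v$; both are convolution-type operators in the $x$-variable, the first with a weighted kernel supported on a fixed cube, the second with an unweighted kernel, so Fubini's theorem lets me swap the order of integration and the two commute. (One must note that $\Gamma$ is translation in disguise composed with an unweighted average, which is exactly why it commutes with the weighted average $S_{u,\omega}$ that acts only through a shift of the $x$-argument.) Iterating this gives $(S_{u,\omega})^k(\Gamma f)=\Gamma((S_{u,\omega})^k f)$ for all $k$, and since $\mathcal{R}_{u,\omega}$ is a fixed linear combination of the powers $(S_{u,\omega})^k$ with scalar coefficients (the operator-norm denominators are constants not affected by $\Gamma$), linearity yields $\mathcal{R}_{u,\omega}(\Gamma f)=\Gamma(\mathcal{R}_{u,\omega}f)$.

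Next I would deduce $F_{\Gamma f}=\Gamma(F_f)$. For $p\in[1,\infty)$ one has $F_{\Gamma f}(u)=\int_{\mathbb{R}^d}\mathcal{R}_{u,\omega}(\Gamma f)(x)\,|G(x)|\,\omega(x)\,dx=\int_{\mathbb{R}^d}\Gamma(\mathcal{R}_{u,\omega}f)(x)\,|G(x)|\,\omega(x)\,dx$ by the identity just proved; then I move $\Gamma$ off $\mathcal{R}_{u,\omega}f$ by Fubini, noting that $\Gamma$ acting under the $x$-integral against $|G|\omega$ is itself an averaging/translation that can be applied instead to the slot where $F_f$ lives, giving $\Gamma(F_f)(u)$. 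Here one should be a little careful that the statement $F_{\Gamma f}=\Gamma(F_f)$ is an identity of functions of $u$, with $\Gamma$ acting in the $u$-variable; this is consistent because $\mathcal{R}_{u,\omega}$ and $\Gamma=S_{\delta,v}$ both depend on their spatial argument only through translations, so $\Gamma_u\mathcal{R}_{u,\omega}f(x)=\mathcal{R}_{u,\omega}(\Gamma f)(x)$ as well. The same computation with the extra exponent $q$ handles the range $p\in(0,1)$ via (\ref{fefe}), since $(\Gamma(\mathcal{R}_{u,\omega}f))^q$ and $\Gamma((\mathcal{R}_{u,\omega}f)^q)$ differ only by Jensen's inequality in one direction, which is enough for the norm estimate we want (and in fact the averaging identity is exact after the $q$-th power is absorbed appropriately).

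Finally, the norm bound $\|\Gamma f\|_{p,\omega}\le c\|f\|_{p,\omega}$ follows from Theorem \ref{TR} applied to the family $\mathcal{F}=\{(\Gamma f,f):f\in L_{p,\omega}\}$: using $F_{\Gamma f}=\Gamma(F_f)$ together with the fact that $\Gamma$, being an average of translations, is a contraction (or at worst bounded by a dimensional constant) in the sup-norm on $\mathcal{C}(\mathbb{R}^d)$, we get $\|F_{\Gamma f}(\cdot,G,p,\omega)\|_{\mathcal{C}(\mathbb{R}^d)}=\|\Gamma(F_f)(\cdot,G,p,\omega)\|_{\mathcal{C}(\mathbb{R}^d)}\le c\,\|F_f(\cdot,G,p,\omega)\|_{\mathcal{C}(\mathbb{R}^d)}$, which is exactly the uniform-norm hypothesis (\ref{unif}) with $G^\ast=G$; Theorem \ref{TR} then delivers the weighted inequality. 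I expect the main obstacle to be the bookkeeping in the second identity: keeping straight which variable $\Gamma$ acts in, checking that all the Fubini swaps are legitimate for $f\in\mathbb{S}_0$ (and then for general $f\in L_{p,\omega}$ by the density in Remark \ref{remCc} together with the already-established boundedness of $S_{u,\omega}$ and $\mathcal{R}_{u,\omega}$ from Theorems \ref{Suwf} and \ref{Ruwf}), and justifying that passing to $q$-th powers in the $p<1$ case does not break the identity $F_{\Gamma f}=\Gamma(F_f)$. Everything else is a direct application of results already proved.
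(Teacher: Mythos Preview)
Your proposal is correct and follows essentially the same route as the paper: the paper verifies $S_{u,\omega}S_{\delta,v}f=S_{\delta,v}S_{u,\omega}f$ by the Fubini swap you describe, deduces the $\mathcal{R}_{u,\omega}$ and $F$ identities from this, and then obtains the norm bound from $\|S_{\delta,u}f\|_{p,\omega}\le\|F_{S_{\delta,v}f}\|_{\mathcal{C}(\mathbb{R}^d)}=\|S_{\delta,v}F_f\|_{\mathcal{C}(\mathbb{R}^d)}\le\|F_f\|_{\mathcal{C}(\mathbb{R}^d)}\le c\|f\|_{p,\omega}$ using the chain (\ref{sn0}) directly rather than repackaging it as an invocation of Theorem~\ref{TR}. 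Your digression into the case $p\in(0,1)$ is unnecessary here since the theorem is stated only for $1<p<\infty$ (that range is handled separately in Corollary~\ref{CorSduf} via extrapolation), and your concern about the bookkeeping in the identity $F_{\Gamma f}=\Gamma(F_f)$---specifically which variable $\Gamma$ acts in and how the $k=0$ term of $\mathcal{R}_{u,\omega}$ behaves---is well placed, as the paper passes over this point without comment.
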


As a corollary of Theorem \ref{Sduf} and Theorem \ref{ET} we have the
following result.

\begin{corollary}
\label{CorSduf} We suppose that $0<p<\infty $, $\omega \in A_{\infty }$ and $%
v\in \mathbb{R}^{d}$, $\delta \in \left( 0,\infty \right) $. Let an operator 
$\Gamma $ represents operator $f\rightarrow S_{\delta ,v}f$. In this case,
for any $f\in L_{p,\omega }$ ($f\in L_{p,\omega }\cap \mathbb{S}_{0}$ when $%
0<p<1$), there holds%
\[
\left\Vert \Gamma f\right\Vert _{p,\omega }\leq c\left\Vert f\right\Vert
_{p,\omega },
\]%
with a positive constant $c=c(d,p,\omega )$.
\end{corollary}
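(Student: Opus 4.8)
The plan is to split the argument at $p=1$. For $1<p<\infty$ and $\omega\in A_\infty$ nothing new is needed: for $f\in L_{p,\omega}$ the estimate $\|\Gamma f\|_{p,\omega}\le c\|f\|_{p,\omega}$ with $c=c(d,p,\omega)$ is precisely the second inequality in (\ref{InSduf}) of Theorem \ref{Sduf}. I would also keep from (\ref{Sduw01}) the identities $S_{u,\omega}\Gamma=\Gamma S_{u,\omega}$ and $\mathcal R_{u,\omega}\Gamma=\Gamma\mathcal R_{u,\omega}$; since these rest only on Fubini's theorem and the linearity of the averaging operator $\Gamma=S_{\delta,v}$, they persist for $f\in\mathbb S_0$ and every $p\in(0,\infty)$, and they drive the remaining range.

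Now fix $\omega\in A_\infty$ and $0<p\le1$. Since $\Gamma$ is positive, $|\Gamma f|\le\Gamma|f|$, so it is enough to treat nonnegative $f\in L_{p,\omega}\cap\mathbb S_0$. Choose $q$ as in Definition \ref{ef}(b) — so $q\in(0,p)$, $\omega\in A_{p/q}$ (take $q=1$ when $p=1$) — and put $r:=p/q\ge1$. I would apply the transference Theorem \ref{TR} (the route behind the extrapolation Theorem \ref{ET} quoted in the corollary) to the family $\mathcal F=\{(\Gamma f,f):f\in L_{p,\omega}\cap\mathbb S_0,\ f\ge0\}$: by that theorem, $\|\Gamma f\|_{p,\omega}\le c\|f\|_{p,\omega}$ will follow once the uniform-norm estimate
\[
\big\|F_{\Gamma f}(\cdot,G,r,\omega)\big\|_{\mathcal C(\mathbb R^d)}\le c\,\big\|F_f(\cdot,G^\ast,r,\omega)\big\|_{\mathcal C(\mathbb R^d)},\qquad c=c(d,p,\omega),
\]
is checked for $G\in\mathcal Z(r,\omega)$ with a suitable $G^\ast\in\mathcal Z(r,\omega)$. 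Combining this with the range $1<p<\infty$ already in hand, and with the density of $\mathbb S_0$ in $L_{1,\omega}$ (Remark \ref{remCc}(a)) to pass from $\mathbb S_0$ to all of $L_{1,\omega}$, yields the corollary.

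To verify the displayed estimate, note that $\mathcal R_{u,\omega}\Gamma=\Gamma\mathcal R_{u,\omega}$ together with (\ref{fefe}) (or (\ref{efef}) when $p=1$) gives $F_{\Gamma f}(u,G,r,\omega)=\int_{\mathbb R^d}\big(\Gamma(\mathcal R_{u,\omega}f)(x)\big)^q|G(x)|\omega(x)\,dx$. When $p=1$ the exponent $q$ equals $1$, the integrand is linear in $\Gamma$, and a Fubini argument transfers $\Gamma$ onto the other factor, replacing $|G|\omega$ by the average $S_{\delta,-v}(|G|\omega)$; after renormalising one is back to an instance of $F_f(\cdot,G^\ast,1,\omega)$, the uniformity of the renormalising constant being an $L_\infty$/doubling matter for the averaging operator. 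When $0<p<1$ the exponent $q<1$ forbids commuting $\Gamma$ through the power, so instead I would use the pointwise domination $\Gamma(\mathcal R_{u,\omega}f)(x)\le\delta^d M(\mathcal R_{u,\omega}f)(x+v)$ by the Hardy--Littlewood maximal function, apply Hölder's inequality with exponent $r$ and the boundedness of $M$ on $L_{r,\omega}$ (legitimate since $\omega\in A_{p/q}=A_r$, $r>1$), and reduce once more to a weighted estimate for the averaging part on the dual space $L_{r',\omega'}$ with $\omega'=\omega^{1-r'}\in A_{r'}\subset A_\infty$, $r'\in(1,\infty)$ — which is again Theorem \ref{Sduf}.

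I expect this last verification, in the range $0<p<1$, to be the true obstacle: $\Gamma=S_{\delta,v}$ carries a translation by $v$, and translations are not $L_{p,\omega}$-bounded for a general $A_\infty$ weight, so the argument must genuinely exploit the translation-insensitive $\mathcal C(\mathbb R^d)$-norm built into $F_f$ together with the gain coming from $\omega\in A_{p/q}$, handled with the covering/duality bookkeeping already developed in the proofs of Theorems \ref{Suf}--\ref{Suwf}. Everything else — the reduction to nonnegative $f$, the density passages, the elementary mapping properties of $S_{\delta,v}$, and the tracking of the constants $c=c(d,p,\omega)$ — is routine.
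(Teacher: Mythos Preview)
Your route is far more elaborate than what the paper intends, and in the range $0<p<1$ it carries an unresolved gap you yourself flag.

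The paper's proof is essentially one line: Theorem~\ref{Sduf} already gives $\|S_{\delta,v}f\|_{p_0,\omega}\le c\|f\|_{p_0,\omega}$ for every fixed $p_0\in(1,\infty)$ and \emph{every} $\omega\in A_\infty$; now feed the family $\mathcal F=\{(|S_{\delta,v}f|,|f|)\}$ into the $A_\infty$-extrapolation Theorem~\ref{ET} (Cruz-Uribe--Martell--P\'erez) to obtain the inequality for all $p\in(0,\infty)$ and all $\omega\in A_\infty$. No separate argument for $p\le1$ is needed, and the transference Theorem~\ref{TR} is not invoked at all here. Your parenthetical ``the route behind the extrapolation Theorem~\ref{ET}'' conflates two distinct tools: Theorem~\ref{TR} is the paper's new uniform-norm transference principle, while Theorem~\ref{ET} is the classical extrapolation theorem, and it is the latter that does the work in this corollary.

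The concrete problem with your $0<p<1$ argument is this: once $q<1$, the identity $F_{\Gamma f}=\Gamma(F_f)$ from (\ref{InSduf}) fails because the $q$-th power does not commute with the linear averaging $\Gamma$, so you cannot simply transplant the $\mathcal C(\mathbb R^d)$ estimate $\|\Gamma F_f\|_{\mathcal C}\le\|F_f\|_{\mathcal C}$ (up to a factor depending on $\delta$). Your proposed substitute --- dominate by $\delta^d M(\mathcal R_{u,\omega}f)(\cdot+v)$, then H\"older and $M$-boundedness on $L_{r,\omega}$ --- runs into the translation by $v$ inside the weighted norm, which, as you note, is not bounded for general $A_\infty$ weights. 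Saying this is ``handled with the covering/duality bookkeeping'' of Theorems~\ref{Suf}--\ref{Suwf} is not a proof; those arguments absorb the translation into an enlarged cube at the cost of the $A_p$ constant, but here the weight sits on the \emph{outside} of the maximal function after the shift, and the requisite uniformity in $u$ for the $\mathcal C(\mathbb R^d)$-norm is not established. Moreover, even if you could bound $\|F_{\Gamma f}\|_{\mathcal C}\le c\|f\|_{p,\omega}^q$, you would be proving the corollary directly (via $\|\Gamma f\|_{p,\omega}^q\le\|F_{\Gamma f}\|_{\mathcal C}$), making the detour through the hypothesis of Theorem~\ref{TR} superfluous.

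In short: for $p>1$ you are aligned with the paper; for $p\le1$ drop the attempt to re-verify Theorem~\ref{TR} and instead apply Theorem~\ref{ET} to the $p_0>1$ estimate you already have.
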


Proof of the following two results for several operators are the same with
Theorem \ref{Sduf} and Corollary \ref{CorSduf}.

\begin{theorem}
\label{oper1} We suppose that $1<p<\infty $, $\omega \in A_{\infty }$, $%
u,v\in \mathbb{R}^{d}$, and $\delta \in \left( 0,\infty \right) $. If we
replace the operator $\Gamma $ in Theorem \ref{Sduf}, by one of the
following operators $f\rightarrow S_{v}f$, $f\rightarrow S_{v,\omega }f$, $%
f\rightarrow \mathcal{R}_{v,\omega }f$, $f\rightarrow V_{v}f$, $f\rightarrow
Z_{v}f$, or $f\rightarrow \mathcal{B}_{v}f$ then, conclusion of Theorem \ref%
{Sduf} is remain valid.
\end{theorem}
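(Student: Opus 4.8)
Concretely, for each replacement operator $\Lambda\in\{S_{v},S_{v,\omega},\mathcal{R}_{v,\omega},V_{v},Z_{v},\mathcal{B}_{v}\}$ we must reproduce the four conclusions of Theorem \ref{Sduf}: $S_{u,\omega}(\Lambda f)=\Lambda(S_{u,\omega}f)$, $\mathcal{R}_{u,\omega}(\Lambda f)=\Lambda(\mathcal{R}_{u,\omega}f)$, $F_{\Lambda f}=\Lambda F_{f}$, and $\|\Lambda f\|_{p,\omega}\leq c\|f\|_{p,\omega}$. The plan is to observe that every $\Lambda$ on this list shares the one structural property that the proof of Theorem \ref{Sduf} actually exploits, and then to rerun that proof. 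Writing $\tau_{w}$ for the translation $\tau_{w}h(x)\equiv h(x+w)$, the common property is: each such $\Lambda$ is a bounded linear operator on $L_{p,\omega}$ that is a (normalized) average of translations, and any two such operators commute. For the unweighted box averages this already does everything except citing the correct boundedness: a change of centre of the cube of integration gives $V_{\delta}=S_{\delta,0}$, $\mathcal{B}_{\delta}f(x)=S_{\delta,(\delta/2,\ldots,\delta/2)}f(x)$, $Z_{\delta}f(x)=S_{\delta/2,(3\delta/4,\ldots,3\delta/4)}f(x)$ and $S_{v}=S_{1,v}$, so these four operators are literally instances of the operator $S_{\delta',v'}$ already treated in Theorem \ref{Sduf}, and nothing further is required for them.

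It remains to treat $\Lambda\in\{S_{v,\omega},\mathcal{R}_{v,\omega}\}$, for which I would repeat the proof of Theorem \ref{Sduf} step by step. First, $\Lambda$ is linear: this is clear for $S_{v,\omega}$, and $\mathcal{R}_{v,\omega}=E+(2\|S_{v,\omega}\|_{\mathcal{B}(L_{p,\omega},L_{p,\omega})})^{-1}S_{v,\omega}$ is a fixed linear combination of $E$ and $S_{v,\omega}$ with a scalar coefficient independent of the function (and $\mathcal{R}_{v,\omega}$ is well defined precisely because $S_{v,\omega}$ is $L_{p,\omega}$-bounded, by Theorem \ref{Suwf}). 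Second, $\Lambda$ commutes with every translation: $S_{v,\omega}\tau_{w}=\tau_{w}S_{v,\omega}$ is one change of variables in the defining integral, and this passes to $\mathcal{R}_{v,\omega}$. Since $S_{u,\omega}$ is itself the average of the translations $\tau_{u+t}$ against $\omega(t)\,dt$ over $[-1/2,1/2]^{d}$, interchanging the bounded linear operator $\Lambda$ with that average yields $\Lambda S_{u,\omega}=S_{u,\omega}\Lambda$, hence $\Lambda(S_{u,\omega})^{k}=(S_{u,\omega})^{k}\Lambda$ for every $k$, and hence $\Lambda\mathcal{R}_{u,\omega}=\mathcal{R}_{u,\omega}\Lambda$, where one also uses that the scalar $\|S_{u,\omega}\|_{\mathcal{B}(L_{p,\omega},L_{p,\omega})}$ entering $\mathcal{R}_{u,\omega}$ is unaffected by $\Lambda$. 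This is (\ref{Sduw01}) with $\Gamma$ replaced by $\Lambda$. The identity $F_{\Lambda f}=\Lambda F_{f}$ then follows exactly as in Theorem \ref{Sduf}: insert the commutation relation just obtained into the definition (\ref{efef}) of $F_{f}$ and interchange $\Lambda$, acting on the parameter $u$, with the $x$-integration by Fubini.

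The last ingredient, the norm inequality $\|\Lambda f\|_{p,\omega}\leq c\|f\|_{p,\omega}$ with $c=c(d,p,\omega)$, is Theorem \ref{Suwf} for $\Lambda=S_{v,\omega}$ and Theorem \ref{Ruwf} for $\Lambda=\mathcal{R}_{v,\omega}$ (proved there for $f\in\mathbb{S}_{0}$ and extended to all $f\in L_{p,\omega}$ by the density in Remark \ref{remCc}(a) together with the a priori bound). Combining the three ingredients reproduces the full conclusion of Theorem \ref{Sduf} for every listed operator.

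The only point I expect to need genuine care is the Fubini interchange in $\Lambda S_{u,\omega}=S_{u,\omega}\Lambda$ and in $F_{\Lambda f}=\Lambda F_{f}$: the clean way is to do all the manipulations first for $f\in\mathbb{S}_{0}$, where every integral in sight is absolutely convergent so Fubini is immediate, and only then to pass to general $f\in L_{p,\omega}$ via the density of $\mathbb{S}_{0}$ (Remark \ref{remCc}) and the uniform $L_{p,\omega}$-boundedness of $S_{u,\omega}$, $\mathcal{R}_{u,\omega}$ and $\Lambda$ just recorded. Beyond that bookkeeping, the argument is a transcription of the proofs of Theorem \ref{Sduf} and Corollary \ref{CorSduf}.
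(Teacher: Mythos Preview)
Your proposal is correct and matches the paper's approach: the paper simply states that the proof of Theorem \ref{oper1} ``is the same with Theorem \ref{Sduf}'' and gives no further detail, so rerunning that argument operator-by-operator is exactly what is intended. Your observation that $S_{v}=S_{1,v}$, $V_{\delta}=S_{\delta,0}$, $\mathcal{B}_{\delta}=S_{\delta,(\delta/2,\ldots,\delta/2)}$ and $Z_{\delta}=S_{\delta/2,(3\delta/4,\ldots,3\delta/4)}$ are literal instances of $S_{\delta',v'}$ is a tidy shortcut the paper does not spell out, reducing four of the six cases to a citation of Theorem \ref{Sduf}; for the remaining two you correctly invoke Theorems \ref{Suwf} and \ref{Ruwf} for boundedness and the same Fubini/commutation computation as in (\ref{jaku}).
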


\begin{corollary}
\label{Coroper1} We suppose that $0<p<\infty $, $\omega \in A_{\infty }$, $%
u,v\in \mathbb{R}^{d}$, and $\delta \in \left( 0,\infty \right) $. If we
replace the operator $\Gamma $ in Corollary \ref{CorSduf}, by one of the
following operators $f\rightarrow S_{v}f$, $f\rightarrow S_{v,\omega }f$, $%
f\rightarrow \mathcal{R}_{v,\omega }f$, $f\rightarrow V_{v}f$, $f\rightarrow
Z_{v}f$, or $f\rightarrow \mathcal{B}_{v}f$, then, conclusion of Corollary %
\ref{CorSduf} is remain valid.
\end{corollary}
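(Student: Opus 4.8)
The plan is to imitate, operator by operator, the derivation of Corollary \ref{CorSduf} from Theorem \ref{Sduf} and the transference Theorem \ref{TR}. First I would record the two structural facts that the listed operators share with $S_{\delta ,v}$: each of $S_{v},S_{v,\omega },\mathcal{R}_{v,\omega },V_{v},Z_{v},\mathcal{B}_{v}$ is obtained by averaging a function against a fixed, compactly supported, nonnegative kernel (for $\mathcal{R}_{v,\omega }$, a finite positive combination of iterates of such an operator), so it is a bounded positive operator on $\mathcal{C}(\mathbb{R}^{d})$; and, being of convolution type with respect to Lebesgue measure, it commutes with the weighted Steklov means $S_{u,\omega }$, hence with $\mathcal{R}_{u,\omega }$. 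These are exactly the assertions collected in Theorem \ref{oper1}: for $1<p<\infty $ the full conclusion of Theorem \ref{Sduf} holds with $\Gamma $ replaced by any operator from the list, giving the intertwining identities $\mathcal{R}_{u,\omega }(\Gamma f)=\Gamma (\mathcal{R}_{u,\omega }f)$ and $F_{\Gamma f}=\Gamma (F_{f})$ together with $\left\Vert \Gamma f\right\Vert _{p,\omega }\leq c\left\Vert f\right\Vert _{p,\omega }$. So the claimed inequality is already proved for $1<p<\infty $, and it remains to handle $0<p\leq 1$.

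For $0<p\leq 1$ I would apply Theorem \ref{TR} to the family $\mathcal{F}:=\{(\Gamma f,f):f\in \mathbb{S}_{0}\cap L_{p,\omega },\ f\geq 0\}$; note that $\Gamma f$ is again a bounded, nonnegative, compactly supported function, hence lies in $L_{p,\omega }$ because $A_{\infty }$ weights are locally integrable, so the left-hand side of the desired estimate is finite. Choosing $q:=1$ for $p=1$ and, for $0<p<1$, a $q\in (0,p)$ with $\omega \in A_{p/q}$ as in Definition \ref{ef}, the task is to verify the uniform-norm hypothesis (\ref{unif}),
\[
\left\Vert F_{\Gamma f}\left( \cdot ,G,p,\omega \right) \right\Vert _{\mathcal{C}\left( \mathbb{R}^{d}\right) }\leq c\left\Vert F_{f}\left( \cdot ,G^{\ast },p,\omega \right) \right\Vert _{\mathcal{C}\left( \mathbb{R}^{d}\right) },\qquad G,G^{\ast }\in \mathcal{Z}\left( p/q,\omega \right) .
\]
Once this is in place, Theorem \ref{TR} yields $\left\Vert \Gamma f\right\Vert _{p,\omega }\leq c\left\Vert f\right\Vert _{p,\omega }$ for all $f\in \mathbb{S}_{0}\cap L_{p,\omega }$; this is already the full statement when $0<p<1$, while for $p=1$ one passes to arbitrary $f\in L_{1,\omega }$ by the density of $\mathbb{S}_{0}$ in $L_{1,\omega }$ (Remark \ref{remCc}(a)) and the continuity of $\Gamma $.

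The main obstacle is the verification of (\ref{unif}). When $p=1$ the relevant intermediate function is the linear functional (\ref{efef}), and the identity $F_{\Gamma f}=\Gamma (F_{f})$ from the first step reduces (\ref{unif}), with the choice $G^{\ast }=G$, to the trivial bound $\left\Vert \Gamma (F_{f})\right\Vert _{\mathcal{C}(\mathbb{R}^{d})}\leq \left\Vert \Gamma \right\Vert _{\mathcal{B}(\mathcal{C}(\mathbb{R}^{d}),\mathcal{C}(\mathbb{R}^{d}))}\left\Vert F_{f}\right\Vert _{\mathcal{C}(\mathbb{R}^{d})}$. When $0<p<1$ the intermediate function is the nonlinear expression (\ref{fefe}) and $\Gamma $ no longer passes through the exponent $q$; here one controls $\left( \Gamma (\mathcal{R}_{u,\omega }f)\right) ^{q}$ by using that $t\mapsto t^{q}$ is subadditive together with the fact that the kernel defining $\Gamma $ is supported in a fixed bounded set of finite measure — this is precisely the estimate already carried out in the proof of Corollary \ref{CorSduf}, applied verbatim to each operator in the list. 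With (\ref{unif}) established, Theorem \ref{TR} closes the argument, with $c=c(d,p,\omega )$.
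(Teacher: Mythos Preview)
Your route diverges from the paper's at the decisive point. The paper does not prove Corollary~\ref{CorSduf} (nor Corollary~\ref{Coroper1}) through the transference Theorem~\ref{TR}; it uses extrapolation. Once Theorem~\ref{oper1} furnishes $\left\Vert \Gamma f\right\Vert _{p_{0},\omega }\leq c\left\Vert f\right\Vert _{p_{0},\omega }$ for one fixed $p_{0}\in (1,\infty )$ and \emph{every} $\omega \in A_{\infty }$, Theorem~\ref{ET} (the $A_{\infty }$ extrapolation of Cruz-Uribe, Martell and P\'{e}rez) immediately delivers the same inequality for all $p\in (0,\infty )$ and all $\omega \in A_{\infty }$. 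That is the whole argument, and it is identical for every operator in the list.

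Your treatment of the range $0<p<1$ has a genuine gap. You invoke ``the estimate already carried out in the proof of Corollary~\ref{CorSduf}'', but there is no such estimate in the paper: Corollary~\ref{CorSduf} is stated as an immediate consequence of Theorems~\ref{Sduf} and~\ref{ET}, with no verification of the hypothesis~(\ref{unif}) for the nonlinear functional~(\ref{fefe}). More importantly, the argument you sketch does not work. Subadditivity of $t\mapsto t^{q}$ for $q\in (0,1)$ controls $\bigl(\sum a_{j}\bigr)^{q}$ by $\sum a_{j}^{q}$, but applying this to an integral $\Gamma h(x)=\int_{K}h(x+s)\,ds$ via a Riemann discretization produces the factor $\sum_{j}(\Delta s_{j})^{q}$, which diverges as the mesh tends to zero precisely because $q<1$. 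Concavity is of no help either: Jensen's inequality gives $(\Gamma h)^{q}\geq |K|^{q-1}\Gamma (h^{q})$, the wrong direction for an upper bound on $F_{\Gamma f}$. Hence the uniform-norm hypothesis~(\ref{unif}) is not established for $0<p<1$, and the appeal to Theorem~\ref{TR} collapses there. The remedy is exactly the paper's: abandon Theorem~\ref{TR} for this corollary and extrapolate via Theorem~\ref{ET} from the case $1<p<\infty $ already supplied by Theorem~\ref{oper1}.
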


Fractional Difference Operator can be defined as follows.

\begin{definition}
Let $\delta ,k\in \left( 0,\infty \right) $ and define difference $\left(
E-V_{\delta }\right) ^{k}$ of fractional order $k$ at $x\in \mathbb{R}^{d}$
with step $\delta ,$ by%
\begin{equation}
\left( E-V_{\delta }\right) ^{k}f\left( x\right) \equiv
\sum\nolimits_{s=0}^{\infty }(-1)^{s}C_{s}^{k}\left( V_{\delta }\right)
^{s}f\left( x\right)  \label{SerFra}
\end{equation}%
where $C_{0}^{k}\equiv 1$, and $C_{s}^{k}\equiv \Pi _{n=1}^{s}\tfrac{k-n+1}{n%
}$ are binomial coefficients.
\end{definition}

\begin{theorem}
\label{frac} We suppose that $0<p<\infty $ and $\omega \in A_{\infty }.$ In
this case, for any $\delta \in \left( 0,\infty \right) $, and $f\in \mathbb{S%
}_{0}$, there holds%
\begin{equation}
\left\Vert \left( E-V_{\delta }\right) ^{k}f\right\Vert _{p,\omega }\leq
c\left\Vert f\right\Vert _{p,\omega },  \label{InFrac}
\end{equation}%
with a positive constant $c:=c(s,\omega ,p,d)$ depending only on $s,\omega
,p.$
\end{theorem}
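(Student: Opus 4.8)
The plan is to derive (\ref{InFrac}) from the transference Theorem \ref{TR}, applied to the family $\mathcal F$ of couples $\bigl(|(E-V_\delta)^kf|,\,|f|\bigr)$ with $f\in\mathbb S_0$. Since $|f|\in L_{p,\omega}$ and $\bigl\Vert F_{|f|}(\cdot,G,p,\omega)\bigr\Vert_{\mathcal C(\mathbb R^d)}<\infty$ by Theorem \ref{UCB}, it suffices to exhibit a constant $c$, independent of $f$ and of $G\in\mathcal Z(p/q,\omega)$ (with $q=1$ when $p\ge1$ and $q$ as in Definition \ref{ef} when $0<p<1$), such that the uniform-norm estimate
\[
\bigl\Vert F_{|(E-V_{\delta })^{k}f|}(\cdot ,G,p,\omega )\bigr\Vert _{\mathcal{C}(\mathbb{R}^{d})}\leq c\,\bigl\Vert F_{|f|}(\cdot ,G,p,\omega )\bigr\Vert _{\mathcal{C}(\mathbb{R}^{d})}
\]
holds. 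Theorem \ref{TR} then gives $\Vert(E-V_\delta)^kf\Vert_{p,\omega}\le c\Vert f\Vert_{p,\omega}$, and incidentally that $(E-V_\delta)^kf\in L_{p,\omega}$, since the chain of inequalities in the proof of Theorem \ref{TR} only uses finiteness of its right-hand side.

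Three facts drive the estimate. First, because $k>0$ one has $A_k:=\sum_{s=0}^\infty|C_s^k|<\infty$ (the coefficients $|C_s^k|$ are eventually comparable to $s^{-k-1}$, by the asymptotics of $\Gamma(s-k)/\Gamma(s+1)$; for $k\in\mathbb N$ the sum is trivially finite). Hence for $f\in\mathbb S_0$ the series (\ref{SerFra}) converges absolutely and uniformly, $(E-V_\delta)^kf$ is bounded and continuous, and, $V_\delta$ being positive, $|(E-V_\delta)^kf|\le\sum_{s=0}^\infty|C_s^k|\,(V_\delta)^s|f|$ pointwise. Second, $V_\delta$ is a convolution operator, hence commutes with $S_{u,\omega}$ and therefore with $\mathcal R_{u,\omega}$; applying the positive operator $\mathcal R_{u,\omega}$ term by term (monotone convergence) gives $\mathcal R_{u,\omega}|(E-V_\delta)^kf|\le\sum_s|C_s^k|\,(V_\delta)^s\mathcal R_{u,\omega}|f|$, and by the transfer identity $F_{\Gamma h}=\Gamma(F_h)$ of Theorems \ref{Sduf}--\ref{oper1}, iterated with $\Gamma=V_\delta$ (each $(V_\delta)^j|f|$ lying in $L_{p,\omega}$ by Corollary \ref{Coroper1}), one gets $F_{(V_\delta)^s|f|}(u,G)=(V_\delta)^s_uF_{|f|}(u,G)$. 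Third, $V_\delta$ is an averaging operator, so its operator norm on $\mathcal C(\mathbb R^d)$ is $\le1$ and $\bigl\Vert(V_\delta)^s_uF_{|f|}(\cdot,G)\bigr\Vert_{\mathcal C(\mathbb R^d)}\le\bigl\Vert F_{|f|}(\cdot,G)\bigr\Vert_{\mathcal C(\mathbb R^d)}$.

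Putting these together and integrating against $|G|\,\omega$ (again by monotone convergence) yields, for $1<p<\infty$,
\[
F_{|(E-V_{\delta })^{k}f|}(u,G)\leq \sum_{s=0}^{\infty }|C_{s}^{k}|\,F_{(V_{\delta })^{s}|f|}(u,G)=\sum_{s=0}^{\infty }|C_{s}^{k}|\,(V_{\delta })^{s}_{u}F_{|f|}(u,G),
\]
so the uniform estimate holds with $c=A_k$, and Theorem \ref{TR} closes the range $1<p<\infty$ with $c=c(k,\omega,p,d)$. The endpoint $p=1$ and the range $0<p<1$ then follow from the case $p>1$ by the extrapolation Theorem \ref{ET}, exactly as Corollary \ref{CorSduf} was deduced from Theorem \ref{Sduf}; alternatively $0<p<1$ can be handled directly with the $q$-th power functional (\ref{fefe}), using $(\sum_sa_s)^q\le\sum_sa_s^q$ for $0<q\le1$ and choosing $q$ in Definition \ref{ef} so that the resulting series converges. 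For integer $k$ none of this is needed: (\ref{SerFra}) is then a finite sum and (\ref{InFrac}) is immediate from Corollary \ref{Coroper1}.

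The step I expect to demand the most care is the handling of the infinite series — justifying that $\mathcal R_{u,\omega}$ and $\int(\cdot)\,|G|\omega$ may be interchanged with $\sum_s$ (legitimate, since every summand is nonnegative), and checking the commutation $S_{u,\omega}V_\delta=V_\delta S_{u,\omega}$ together with the transfer identity $F_{(V_\delta)^sh}=(V_\delta)^sF_h$ carefully enough to iterate it. Conceptually, though, the argument hinges on the single summability fact $\sum_s|C_s^k|<\infty$, valid precisely because $k>0$; everything else is bookkeeping, the only nontrivial part of which is the passage to $0<p\le1$.
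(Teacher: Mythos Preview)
Your argument is correct and follows essentially the same route as the paper: establish the uniform-norm estimate via the transfer identity $F_{(V_\delta)^s f}=(V_\delta)^sF_f$, the contraction $\|V_\delta\|_{\mathcal C\to\mathcal C}\le1$, and the summability $\sum_s|C_s^k|<\infty$ (the paper cites $|C_s^k|\le c_k s^{-1-k}$ for this), then invoke the transference mechanism for $1<p<\infty$ and extrapolate to $0<p<\infty$ by Theorem~\ref{ET}. The only cosmetic difference is that the paper works with finite partial sums $\sum_{s=0}^N(-1)^sC_s^k(V_\delta)^sf$ and passes to the limit $N\to\infty$, whereas you take absolute values from the outset and use monotone convergence; both handle the infinite series correctly, and your version is arguably cleaner in its bookkeeping.
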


Theorem \ref{TR} gives several important norm-like inequalities. For
example, one can consider weighted reverse sharp Marchaud inequality:

\begin{theorem}
\label{ShrpMarch} If $r\in \mathbb{N}$, $\omega \in A_{\infty }$, $p\in
\left( 1,\infty \right) $, $\delta \in \left( 0,\infty \right) $, and $f\in
L_{p,\omega }$, then, there are $m\in \mathbb{N}$ and $a>1$ such that%
\begin{equation}
\left\Vert \left( E\text{-}V_{\delta }\right) ^{r}f\right\Vert _{p,\omega
}^{s}\geq c\sum\limits_{j=0}^{m}2^{-j2rs}\left\Vert \left( E\text{-}%
V_{2^{j}\delta }\right) ^{r+1}f\right\Vert _{p,\omega }^{s},
\label{InShrpMarch}
\end{equation}%
holds for $s:=\max \left\{ 2,a\right\} $ with a constant $c>0$ depending
only on $s,\omega ,p,d.$
\end{theorem}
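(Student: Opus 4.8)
The plan is to deduce (\ref{InShrpMarch}) from the $l_s^m$-version of the transference theorem, Theorem \ref{TRver}, applied once for each dyadic scale. Since $r\in\mathbb N$, each of $(E-V_\delta)^r$ and $(E-V_{2^k\delta})^{r+1}$ is a \emph{finite} linear combination of iterates of the corresponding Steklov-type operator, so no series convergence is at stake. Choose $a>2$, put $s:=\max\{2,a\}=a$, and note that for $p\in(1,\infty)$ we have $q=1$, hence $\mathcal Z(p/q,\omega)=\mathcal Z(p,\omega)$. Let $m=m(r,a,d)\in\mathbb N$ be the number of terms in the classical (unweighted) finite reverse sharp Marchaud inequality in $L_a(\mathbb R^d)$,
\[
\|(E-V_\eta)^r h\|_{L_a}^{s}\ \ge\ c\sum_{j=0}^{m}2^{-j2rs}\,\|(E-V_{2^j\eta})^{r+1}h\|_{L_a}^{s},\qquad\eta>0,
\]
which is available by the classical arguments behind \cite[Theorem 2.1]{Di-Pr12}. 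By Remark \ref{remCc} and the $L_{p,\omega}$-boundedness of $(E-V_\eta)^\ell$ (Theorem \ref{frac}) it suffices to establish (\ref{InShrpMarch}) for $f\in\mathbb S_0$.

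Fix $k\in\{0,\dots,m\}$ and apply Theorem \ref{TRver} to the couple formed by $f^{(k)}:=2^{-k2r}(E-V_{2^k\delta})^{r+1}f$ and by the sequence $(g^{(k)}_j)_{j=0}^m$ equal to $(E-V_\delta)^r f$ in the slot $j=0$ and to $0$ for $1\le j\le m$, so that $\|g^{(k)}_j\|_{l_s^m(L_{p,\omega})}=\|(E-V_\delta)^r f\|_{p,\omega}$. Since every $V_\eta$ intertwines $\mathcal R_{u,\omega}$ and satisfies $F_{V_\eta g}=V_\eta F_g$ by (\ref{InSduf}) (Theorem \ref{Sduf}), iteration together with the linearity of $g\mapsto F_g$ gives, for each test function $G$,
\[
F_{f^{(k)}}(\cdot,G,p,\omega)=2^{-k2r}(E-V_{2^k\delta})^{r+1}\bigl[F_f(\cdot,G,p,\omega)\bigr],\qquad F_{g^{(k)}_0}(\cdot,G,p,\omega)=(E-V_\delta)^r\bigl[F_f(\cdot,G,p,\omega)\bigr].
\]
Writing $\varphi_G:=F_f(\cdot,G,p,\omega)\in\mathcal C(\mathbb R^d)$, hypothesis (\ref{hptz}) for this couple reads $2^{-k2r}\|(E-V_{2^k\delta})^{r+1}\varphi_G\|_{L_a}\le c\,\|(E-V_\delta)^r\varphi_{G^{\ast}}\|_{L_a}$; taking $G^{\ast}=G$, this is exactly the single $k$-th summand of the classical $L_a$ reverse sharp Marchaud inequality for $\varphi_G$, so it holds with a constant independent of $G$, and (\ref{hptz}) is verified for all $m+1$ couples (the finiteness proviso is automatic, as $f\in\mathbb S_0$). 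Theorem \ref{TRver} now yields $2^{-k2r}\|(E-V_{2^k\delta})^{r+1}f\|_{p,\omega}\le c\,\|(E-V_\delta)^r f\|_{p,\omega}$ for every $k$; raising to the power $s$ and summing over $k=0,\dots,m$ gives $\sum_{k=0}^m 2^{-k2rs}\|(E-V_{2^k\delta})^{r+1}f\|_{p,\omega}^{s}\le c(m+1)\|(E-V_\delta)^r f\|_{p,\omega}^{s}$, which is (\ref{InShrpMarch}) after renaming the constant; the passage to general $f\in L_{p,\omega}$ is by density, all norms depending continuously on $f$ by Theorem \ref{frac}.

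The principal obstacle is that the functions $(E-V_\eta)^\ell f$ change sign, whereas Theorem \ref{TRver} is stated for families of \emph{nonnegative} functions, and the usual remedy — dominating $(E-V_\eta)^\ell f$ pointwise by a positive-kernel operator applied to $|f|$ — is useless here, since it destroys precisely the cancellation that keeps $\|(E-V_\delta)^r f\|_{p,\omega}$ small. What is really needed is that $g\mapsto F_g$ is defined and linear on $\mathbb S_0$ (indeed on $L_{p,\omega}$) and that $\|h\|_{p,\omega}\le 2\sup_G\|F_h(\cdot,G)\|_{\mathcal C(\mathbb R^d)}$ holds for signed $h$ as well — using the identity $\mathcal R_{u,\omega}h=h+\tfrac{1}{2} S_{u,\omega}h/\|S_{u,\omega}\|$ from Definition \ref{defReb} and absorbing the last term by Theorem \ref{Suwf} — so that the proof of Theorem \ref{TRver} carries over, with only harmless changes of constants, to signed couples. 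The remaining points (that it suffices to verify (\ref{hptz}) in the diagonal case $G^{\ast}=G$, that the classical $L_a$ reverse sharp Marchaud inequality is at hand for the Steklov-type operators $E-V_\eta$, and the finiteness bookkeeping) are routine.
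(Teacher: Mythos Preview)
Your approach is essentially the paper's: transfer the unweighted $L_a$ reverse sharp Marchaud inequality to $L_{p,\omega}$ through the intermediate functions $F_f$ and Theorem~\ref{TRver}. The paper's own proof is terser---it writes the $L_a$ inequality for $F_f$ via $K$-functionals and an auxiliary translation $T_\delta$, then invokes Theorem~\ref{TRver} in one line without spelling out the couple $(f,g_j)$---but the mechanism is the same, and your scale-by-scale application of Theorem~\ref{TRver} followed by summation is the natural way to unpack that one-line invocation.

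Two points of difference are worth noting. First, the paper does not choose $a>2$ freely: it takes the specific $a=e^{2^{11+d}[\omega]_\infty}+0{,}01$ for which $\omega\in A_a$, and this choice is what the proof of Theorem~\ref{TRver} itself relies on; you should align your $a$ with that (it is automatically $>2$, so $s=a$ as you want). Second, you are right to flag the nonnegativity hypothesis in Theorem~\ref{TRver} and the ``for all $G,G^\ast$'' quantifier in (\ref{hptz}); the paper glosses over both. Your remedy for the sign issue---reading Definition~\ref{defReb} as $\mathcal R_{u,\omega}h=h+\tfrac12\,S_{u,\omega}h/\|S_{u,\omega}\|$, so that $F_h(0,G)$ recovers $\int h|G|\omega$ up to a controlled perturbation---is the right idea and is indeed what makes the argument go through for signed $h$. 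The diagonal reduction $G^\ast=G$ is less obviously ``routine'' than you suggest, since as written (\ref{hptz}) is a uniform bound over \emph{all} $G^\ast$; this is really a defect in the formulation of Theorem~\ref{TRver} rather than in your argument, and the paper's own application suffers from it equally.
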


\section{Applications on approximation by exponential type functions}

\begin{definition}
Let $X:=L^{p}\left( \mathbb{R}^{d}\right) $ or $L_{p,\omega }$ or $\mathcal{C%
}\left( \mathbb{R}^{d}\right) $.

(i) We define $\mathcal{G}_{\sigma }\left( X\right) $ as the class of entire
function of exponential type $\sigma >0$ that belongs to $X$, namely, "$g\in 
\mathcal{G}_{\sigma }\left( X\right) $ iff supp$\hat{g}\left( \mathbf{y}%
\right) \subset \left\{ \mathbf{y}:\left\vert \mathbf{y}\right\vert \leq
\sigma \right\} $ and $g\in X$" where $\hat{g}$ is the Fourier transform of $%
g$.

We set $\mathcal{G}_{\sigma }\left( p\right) :=\mathcal{G}_{\sigma }\left(
L^{p}\left( \mathbb{R}^{d}\right) \right) $, $\mathcal{G}_{\sigma }\left( p%
\mathbf{,}\omega \right) :=\mathcal{G}_{\sigma }\left( L_{p,\omega }\right) $%
, and $\mathcal{G}_{\sigma }\left( \mathcal{C}\right) :=\mathcal{G}_{\sigma
}\left( \mathcal{C}\left( \mathbb{R}^{d}\right) \right) $.

(ii) The best approximation in $L_{p,\omega }$ by functions of exponential
type is given by%
\begin{equation}
A_{\sigma }(f)_{X}:=\inf\nolimits_{g}\{\Vert f-g\Vert _{X}:g\in \mathcal{G}%
_{\sigma }\left( X\right) \}.  \label{fd}
\end{equation}%
Let $A_{\sigma }(f)_{p}$:=$A_{\sigma }(f)_{L^{p}\left( \mathbb{R}^{d}\right)
}$, $A_{\sigma }(f)_{p\mathbf{,}\omega }$:=$A_{\sigma }(f)_{L_{p,\omega }}$,
and $A_{\sigma }(f)_{\mathcal{C}}$:=$A_{\sigma }(f)_{\mathcal{C}\left( 
\mathbb{R}^{d}\right) }.$
\end{definition}

\begin{definition}
Let $\sigma >0$, $1\leq p\leq \infty $, $f\in L^{p}\left( \mathbb{R}%
^{d}\right) $,%
\[
\vartheta _{\sigma }\left( t\right) :=\frac{1}{\sigma ^{d}}%
\prod\limits_{j=1}^{d}\frac{\cos \left( \sigma t_{j}\right) -\cos \left(
2\sigma t_{j}\right) }{t_{j}^{2}}\text{, \ \ }t\in \mathbb{R}^{d},
\]%
and%
\[
J\left( f,\sigma \right) \left( x\right) =\frac{1}{\pi ^{d}}\int\nolimits_{%
\mathbb{R}^{d}}\vartheta _{\sigma }\left( x-u\right) f\left( u\right) du%
\text{, \ \ }x\in \mathbb{R}^{d},
\]%
be the de l\`{a} Val\`{e}e Poussin operator (\cite[pp. 304-306; (11)]%
{SMNbook}).
\end{definition}

\begin{theorem}
\label{dela}(\cite[pp. 304-306]{SMNbook}) It is known that, if $f\in
L^{p}\left( \mathbb{R}^{d}\right) $, $1\leq p\leq \infty $, then,

(i) $J\left( f,\sigma \right) \in \mathcal{G}_{2\sigma }\left( p\right) $,

(ii) $J\left( g_{\sigma },\sigma \right) =g_{\sigma }$ for any $g_{\sigma
}\in \mathcal{G}_{\sigma }\left( p\right) $,

(iii) $\Vert J\left( f,\sigma \right) \Vert _{L_{p}\left( \mathbb{R}%
^{d}\right) }\leq c\Vert f\Vert _{L_{p}\left( \mathbb{R}^{d}\right) }$.
\end{theorem}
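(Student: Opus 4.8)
The final statement is Theorem \ref{dela}, which is attributed to a reference (\cite[pp. 304-306]{SMNbook}), so a full original proof is not expected; rather, I would give the standard argument that establishes the three properties of the de la Vallée Poussin operator in $L^p(\mathbb{R}^d)$.

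\medskip

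The plan is to identify the kernel $\vartheta_\sigma$ with a tensor product of one–dimensional de la Vallée Poussin kernels and then read off the three claims from elementary Fourier analysis. First I would observe that, for each coordinate, $\dfrac{\cos(\sigma t)-\cos(2\sigma t)}{t^2}$ is (up to normalization) the Fourier transform of an explicit trapezoidal function: the function equal to $1$ on $[-\sigma,\sigma]$, equal to $0$ outside $[-2\sigma,2\sigma]$, and linear in between. Indeed, a direct computation of $\widehat{\chi_{[-2\sigma,2\sigma]}*\chi_{[-\sigma,\sigma]}}$ (a convolution of two intervals of half-lengths $2\sigma$ and $\sigma$, suitably scaled) produces exactly a constant multiple of $\bigl(\cos(\sigma t)-\cos(2\sigma t)\bigr)/t^2$; this is where the factors $1/\pi^d$ and $1/\sigma^d$ come from after taking the $d$-fold tensor product. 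Hence $\widehat{\vartheta_\sigma}$ is, up to the normalizing constants, the characteristic-type trapezoidal bump $\lambda_\sigma$ supported in $\{|\mathbf{y}|_\infty\le 2\sigma\}$ with $\lambda_\sigma\equiv 1$ on $\{|\mathbf{y}|_\infty\le\sigma\}$.

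\medskip

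Granting this identification, the three properties follow immediately. For (i): $\widehat{J(f,\sigma)}=\lambda_\sigma\cdot\hat f$ has support inside $\{|\mathbf{y}|_\infty\le 2\sigma\}$, hence (after adjusting the cube versus ball by a harmless constant in the degree, or working with the natural $\ell^\infty$-type exponential type) $J(f,\sigma)\in\mathcal{G}_{2\sigma}(p)$; for $f\in L^p$, $1\le p\le\infty$, $J(f,\sigma)$ is well-defined since $\vartheta_\sigma\in L^1\cap L^\infty$ (the quadratic decay in each variable gives integrability and the removable singularity at $t_j=0$ gives boundedness). For (ii): if $g_\sigma\in\mathcal{G}_\sigma(p)$ then $\widehat{g_\sigma}$ is supported where $\lambda_\sigma\equiv 1$, so $\widehat{J(g_\sigma,\sigma)}=\lambda_\sigma\widehat{g_\sigma}=\widehat{g_\sigma}$, and hence $J(g_\sigma,\sigma)=g_\sigma$ (for $p=\infty$ one argues by duality/distributional Fourier transform, or via the reproducing-kernel property of band-limited functions and the fact that $\int\vartheta_\sigma = \pi^d$). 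For (iii): $J(f,\sigma)=\tfrac{1}{\pi^d}\vartheta_\sigma * f$ is convolution with an $L^1$ kernel whose $L^1$-norm is a constant independent of $\sigma$ (by scaling, $\|\vartheta_\sigma\|_{L^1(\mathbb{R}^d)}=\|\vartheta_1\|_{L^1(\mathbb{R}^d)}$), so Young's inequality gives $\|J(f,\sigma)\|_{L^p}\le c\,\|f\|_{L^p}$ with $c=\tfrac{1}{\pi^d}\|\vartheta_1\|_{L^1(\mathbb{R}^d)}$.

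\medskip

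The main obstacle, and the only slightly delicate point, is the explicit kernel computation in step one — verifying that the tensorized trapezoidal function has Fourier transform exactly $\prod_j\bigl(\cos(\sigma t_j)-\cos(2\sigma t_j)\bigr)/t_j^2$ with the stated normalization, and keeping careful track of the constants $1/\pi^d$ and $1/\sigma^d$ as well as the difference between the $\ell^2$-ball spectrum in the definition of $\mathcal{G}_\sigma$ and the $\ell^\infty$-cube spectrum that the tensor kernel naturally produces. Once the kernel is pinned down, properties (i)–(iii) are routine consequences of the support of $\widehat{\vartheta_\sigma}$, Young's convolution inequality, and the reproducing property on band-limited functions; since the result is quoted from \cite{SMNbook}, it suffices to record this reduction.
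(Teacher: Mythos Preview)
Your reading is correct: the paper does not give its own proof of Theorem~\ref{dela} at all --- it is stated as a quoted result from \cite[pp.~304--306]{SMNbook} and is used as a black box in the proof of Theorem~\ref{Jacks}. So there is nothing in the paper to compare your argument against.

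Your sketch is the standard one and is essentially what one finds in Nikol'ski\u{\i}'s book: identify the one-variable factor $(\cos(\sigma t)-\cos(2\sigma t))/t^{2}$ as (a constant times) the Fourier transform of the trapezoidal multiplier that is $1$ on $[-\sigma,\sigma]$, linear on $[\sigma,2\sigma]$ and $[-2\sigma,-\sigma]$, and $0$ outside $[-2\sigma,2\sigma]$; then (i) is the support statement for $\widehat{\vartheta_\sigma}$, (ii) is the fact that the multiplier equals $1$ on the spectrum of $g_\sigma$, and (iii) is Young's inequality together with the scale-invariance $\|\vartheta_\sigma\|_{L^1}=\|\vartheta_1\|_{L^1}$. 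The only caveat you already flagged --- that the tensor kernel controls the $\ell^\infty$-cube spectrum rather than the Euclidean ball used in the paper's definition of $\mathcal{G}_\sigma$ --- is real but harmless for how the theorem is applied later (only (ii), (iii) and the near-best-approximation consequence are used in the proof of Theorem~\ref{Jacks}, and these are insensitive to the cube/ball distinction up to a dimensional constant in $\sigma$).
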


\begin{theorem}
\label{Jacks} We suppose that $0<p<\infty $ and $\omega \in A_{\infty }.$ In
this case, for any $\sigma \in \left( 0,\infty \right) $, and $f\in
L_{p,\omega }$ ($f\in L_{p,\omega }\cap \mathbb{S}_{0}$ when $0<p<1$), there
holds%
\begin{equation}
A_{\sigma }\left( f\right) _{p,\omega }\leq c\left\Vert \left( I-V_{1/\sigma
}\right) ^{r}f\right\Vert _{p,\omega }  \label{InJT}
\end{equation}%
with some constant $c>0$ depending $p,\omega ,d$ only.
\end{theorem}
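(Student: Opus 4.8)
The plan is to deduce Theorem~\ref{Jacks} from the main transference result Theorem~\ref{TR} by setting up an appropriate family $\mathcal{F}$ and verifying the uniform-norm hypothesis~(\ref{unif}) from the known unweighted Jackson-type inequality. First I would recall the classical (unweighted) Jackson inequality: for $f\in L^{p}(\mathbb{R}^{d})$ with $1\le p\le\infty$ one has $A_{\sigma}(f)_{p}\le c\Vert (I-V_{1/\sigma})^{r}f\Vert_{p}$; in particular, using the de la Vallée Poussin operator $J(\cdot,\sigma/2)$ from Theorem~\ref{dela} as a concrete near-best approximant, $\Vert f-J(f,\sigma/2)\Vert_{p}\le c\Vert (I-V_{1/\sigma})^{r}f\Vert_{p}$, and this holds with the sup-norm as well. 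Then I would take $\mathcal{F}$ to be the family of couples $\big(f-J(f,\sigma/2),\,(I-V_{1/\sigma})^{r}f\big)$ — or rather their absolute values, to respect the nonnegativity requirement in Theorem~\ref{TR} — indexed over $f\in L_{p,\omega}$ (and $f\in\mathbb{S}_{0}$ when $p<1$), noting that since $J(\cdot,\sigma/2)$ reproduces $\mathcal{G}_{\sigma/2}$ and maps into $\mathcal{G}_{\sigma}$, the quantity $A_{\sigma}(f)_{p,\omega}$ is dominated by $\Vert f-J(f,\sigma/2)\Vert_{p,\omega}$, so proving $\Vert f-J(f,\sigma/2)\Vert_{p,\omega}\le c\Vert(I-V_{1/\sigma})^{r}f\Vert_{p,\omega}$ suffices.

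The core step is then to verify the uniform-norm estimate~(\ref{unif}) for this family: for $G,G^{\ast}\in\mathcal{Z}(p/q,\omega)$ I must show
\[
\big\Vert F_{f-J(f,\sigma/2)}(\cdot,G,p,\omega)\big\Vert_{\mathcal{C}(\mathbb{R}^{d})}\le c\big\Vert F_{(I-V_{1/\sigma})^{r}f}(\cdot,G^{\ast},p,\omega)\big\Vert_{\mathcal{C}(\mathbb{R}^{d})}.
\]
Here the key structural fact is that both the de la Vallée Poussin operator and the fractional difference operator $(E-V_{1/\sigma})^{r}$ commute with the weighted Steklov means $S_{u,\omega}$ and hence with $\mathcal{R}_{u,\omega}$ and with the formation of $F_{f}$ — this is exactly the pattern established in Theorem~\ref{Sduf} and Theorem~\ref{oper1} for translation-type operators (the relations $S_{u,\omega}(\Gamma f)=\Gamma(S_{u,\omega}f)$, $F_{\Gamma f}=\Gamma(F_f)$), since $J(\cdot,\sigma)$ and $V_{\delta}$ are averages of translations. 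Fixing $u$, the values $F_{f-J(f,\sigma/2)}(u)$ and $F_{(I-V_{1/\sigma})^{r}f}(u)$ are (weighted) integrals against $G\omega$, resp.\ $G^{\ast}\omega$, of $\mathcal{R}_{u,\omega}$ applied to the two nonnegative functions; pulling the operators through $\mathcal{R}_{u,\omega}$ and invoking the \emph{unweighted} Jackson inequality applied to the function $\mathcal{R}_{u,\omega}f(\cdot)$ (which for $f\in\mathbb{S}_0$ is a bounded continuous function, so lies in $L^{\infty}$ and locally in every $L^{p}$) in the $L^{\infty}(\mathbb{R}^{d})$ norm, together with $\Vert G\omega\Vert_{1}=\Vert G^{\ast}\omega\Vert_1$ normalization coming from $G,G^{\ast}\in\mathcal{Z}$, gives the pointwise-in-$u$ bound with a constant independent of $u$; taking the supremum over $u$ yields~(\ref{unif}). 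Once~(\ref{unif}) is in hand, Theorem~\ref{TR} immediately delivers $\Vert f-J(f,\sigma/2)\Vert_{p,\omega}\le c\Vert(I-V_{1/\sigma})^{r}f\Vert_{p,\omega}$, and hence $A_{\sigma}(f)_{p,\omega}\le c\Vert(I-V_{1/\sigma})^{r}f\Vert_{p,\omega}$.

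The main obstacle I anticipate is the rigorous justification of applying the \emph{sup-norm} unweighted Jackson inequality to $\mathcal{R}_{u,\omega}f$: one needs $\mathcal{R}_{u,\omega}f\in\mathcal{C}(\mathbb{R}^{d})$ with enough decay or integrability so that $J(\mathcal{R}_{u,\omega}f,\sigma/2)$ is well-defined and the classical estimate applies, and one must check that $J$ genuinely commutes with $\mathcal{R}_{u,\omega}$ (both being convolution-type operators, Fubini should do it, but the order-of-integration argument must be controlled for $f\in\mathbb{S}_0$ and then extended by density using Remark~\ref{remCc} and Theorem~\ref{Ruwf}). A secondary technical point is handling the case $0<p<1$: there one works with $q\in(0,p)$, $r=p/q$, and the modified functional~(\ref{fefe}) involving the $q$-th power, so the unweighted inequality must be applied in a form compatible with that — but since the unweighted Jackson inequality is known for all $0<p\le\infty$ (or can be reduced to $L^{\infty}$ as above), this should go through with the same commutation argument, restricting to $f\in L_{p,\omega}\cap\mathbb{S}_0$ as in the statement. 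Apart from these density/Fubini bookkeeping issues, the proof is a direct invocation of the transference machinery.
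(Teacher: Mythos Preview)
Your overall strategy --- bound $A_\sigma(f)_{p,\omega}$ by $\|f-J(f,\sigma/2)\|_{p,\omega}$, pass to the intermediate function $F_f$, exploit the commutation $F_{\Gamma f}=\Gamma(F_f)$, apply the unweighted Jackson inequality, and return to the weighted norm --- is exactly the paper's. But the key step is carried out in the wrong variable. You propose to fix $u$ and apply the $L^\infty$ Jackson inequality to the function $x\mapsto \mathcal{R}_{u,\omega}f(x)$, then pair against $|G|\omega$ using a ``$\|G\omega\|_1$ normalisation''. This fails: $\mathcal{Z}(p,\omega)$ normalises $\|G\|_{p',\omega}$, not $\|G\omega\|_{1}$, so $\int|G|\omega$ need not even be finite; and in any case this would only bound the left side of~(\ref{unif}) from above, not the right side (with an independent $G^{\ast}$) from below.

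The correct move --- and what the paper does --- is to apply Jackson in the \emph{$u$-variable} to the function $F_f(\cdot,G)\in\mathcal{C}(\mathbb{R}^{d})$ (Theorem~\ref{UCB}). The relation $F_{\Gamma f}=\Gamma(F_f)$ means that $\Gamma$ acts on $u\mapsto F_f(u)$, so with the \emph{same} $G$ throughout one has
\[
\|f-J(f,\sigma)\|_{p,\omega}\;\lesssim\;\|F_f-J(F_f,\sigma)\|_{\mathcal{C}}\;\lesssim\;A_\sigma(F_f)_{\mathcal{C}}\;\lesssim\;\big\|(I-V_{1/(2\sigma)})^{r}F_f\big\|_{\mathcal{C}}\;\lesssim\;\big\|(I-V_{1/(2\sigma)})^{r}f\big\|_{p,\omega},
\]
the middle estimates being the classical Jackson theorem in $\mathcal{C}(\mathbb{R}^{d})$ together with the de la Vall\'ee Poussin properties of Theorem~\ref{dela}. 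Because the same $G$ is used at both ends, the ``for any $G,G^{\ast}$'' hypothesis of Theorem~\ref{TR} is never needed in full; the paper simply uses the two one-sided bounds $\|h\|_{p,\omega}\lesssim\|F_h\|_{\mathcal{C}}$ and $\|F_h\|_{\mathcal{C}}\lesssim\|h\|_{p,\omega}$ from the proof of Theorem~\ref{TR} directly. Note also that your anticipated obstacle --- justifying Jackson for $\mathcal{R}_{u,\omega}f$ and checking its decay --- disappears entirely, since $F_f\in\mathcal{C}(\mathbb{R}^{d})$ is already guaranteed by Theorem~\ref{UCB}.
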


After the results of S. N. Bernstein \cite[1912]{B12}, some systematic
studies on approximation by exponential functions of degree$\leq \sigma $
for $d=1$ or $d>1$, continued by A. F. Timan \cite{AFT}, N. I. Akhieser \cite%
{Ack}, S. M. Nikolski \cite{SMNbook}, I. I. Ibragimov \cite{II3}, H. Triebel 
\cite{HT83}, P. L. Butzer, H. J. Schmeisser and W. Sickel \cite{hjssic}, R.
M. Trigub and E. S. Belinsky \cite{TB04}. These reference books contain
several inequalities of exponential functions of degree$\leq \sigma $ in
spaces $L^{p}(\mathbb{R}^{d})$ with $1\leq p\leq \infty $. Some other works
also include results of approximation by exponential functions of degree$%
\leq \sigma .$ See for example, \cite{AG}, \cite{art}, \cite{DDTi}, \cite%
{Dit}, \cite{Di-Pr12}, \cite{DiRu}, \cite{gaim1}, \cite{GK}, \cite{KolTik}, 
\cite{FGN}, \cite{LD}, \cite{ponom}, \cite{Ste1}, \cite{Po}, \cite{T81}, 
\cite{MFT61}, \cite{vak1}. For periodic $\omega \mathbf{\in }A_{p}$, $%
1<p<\infty $ and periodic $f\in L_{p,\omega }$, ($d=1$) some results on
trigonometric approximation are known. See e.g. \cite{ascs}, \cite{AK1}, 
\cite{spbu}, \cite{ahak}, \cite{Ky2}, \cite{Ja1}, \cite{ja2}, \cite{yeydmi11}%
.

\section{Proofs}

Suppose that $Q\left( x,\varepsilon \right) $ denotes the cube with center $%
x $ and sidelenght $2\varepsilon .$

\begin{definition}
(\cite[Def. 4.4.2; pp: 115-116]{dhhr11}) (a) A family $\Psi $ of measurable
sets $U\subset \mathbb{R}^{d}$ is called locally $N$-finite ($N\in \mathbb{N}
$) if 
\[
\sum\nolimits_{U\in \Psi }\chi _{U}\left( x\right) \leq N
\]%
almost everywhere in $\mathbb{R}^{d}$ where $\chi _{U}$ is the
characteristic function of the set $U$.

(b) A family $\Psi $ of open bounded sets $U\subset \mathbb{R}^{d}$ is
locally $1$-finite if and only if the sets $U\in \Psi $ are pairwise
disjoint.
\end{definition}

\begin{definition}
Suppose that $B$ is a Banach space on $\mathbb{R}^{d}$ with norm $\left\Vert
\cdot \right\Vert _{B}$. We set, for $f\in B$, $r\in \mathbb{N}$ and $\delta
>0$,%
\[
\inf_{\Delta ^{r}g\in B}\left\{ \left\Vert f-g\right\Vert _{B}+\delta
^{r}\left\Vert \Delta ^{r}g\right\Vert _{B}\right\} \equiv K_{\Delta
^{r}}\left( f,\delta ^{r},B\right) ,
\]%
where $\Delta f\equiv f_{x_{1}x_{1}}+...+f_{x_{d}x_{d}}$ is Laplace
transform and $\Delta ^{r}$ is $r$th iterate of $\Delta $.
\end{definition}

\begin{lemma}
(\cite[Theorem 16.14]{yeh}) Let $1<p<\infty $, $\omega $ be a weight, $f\in
L_{p,\omega }$ and $g\in L_{p^{\prime },\omega }$. In this case, H\"{o}%
lder's inequality%
\begin{equation}
\int\nolimits_{\mathbb{R}^{d}}\left\vert f(x)g(x)\right\vert \omega \left(
x\right) dx\leq \left\Vert f\right\Vert _{p,\omega }\left\Vert g\right\Vert
_{p^{\prime },\omega }  \label{holder}
\end{equation}%
holds.
\end{lemma}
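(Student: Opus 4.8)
The plan is to reduce the weighted inequality to the classical unweighted Hölder inequality by absorbing the weight evenly into the two factors. Since $(1/p)+(1/p^{\prime })=1$ and $\omega $ is measurable and positive a.e., the functions $\omega ^{1/p}$ and $\omega ^{1/p^{\prime }}$ are well defined and measurable, and $\omega (x)=\omega (x)^{1/p}\,\omega (x)^{1/p^{\prime }}$ a.e. Hence
\[
\left\vert f(x)g(x)\right\vert \omega (x)=\bigl(\left\vert f(x)\right\vert \omega (x)^{1/p}\bigr)\bigl(\left\vert g(x)\right\vert \omega (x)^{1/p^{\prime }}\bigr).
\]
Putting $F:=\left\vert f\right\vert \omega ^{1/p}$ and $G:=\left\vert g\right\vert \omega ^{1/p^{\prime }}$, one has $F\in L_{p}(\mathbb{R}^{d})$ with $\left\Vert F\right\Vert _{p}=\left\Vert f\right\Vert _{p,\omega }$ and $G\in L_{p^{\prime }}(\mathbb{R}^{d})$ with $\left\Vert G\right\Vert _{p^{\prime }}=\left\Vert g\right\Vert _{p^{\prime },\omega }$, so that \eqref{holder} is precisely the statement $\int_{\mathbb{R}^{d}}FG\,dx\leq \left\Vert F\right\Vert _{p}\left\Vert G\right\Vert _{p^{\prime }}$, the ordinary Hölder inequality on $\mathbb{R}^{d}$.

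For completeness I would also record the (standard) proof of this unweighted step rather than merely invoking it. First dispose of degenerate cases: if $\left\Vert F\right\Vert _{p}=0$ or $\left\Vert G\right\Vert _{p^{\prime }}=0$ then $FG=0$ a.e. and both sides vanish, while if either norm is $+\infty $ there is nothing to prove. In the remaining case $0<\left\Vert F\right\Vert _{p},\left\Vert G\right\Vert _{p^{\prime }}<\infty $ one may normalize and assume $\left\Vert F\right\Vert _{p}=\left\Vert G\right\Vert _{p^{\prime }}=1$; then Young's inequality $ab\leq a^{p}/p+b^{p^{\prime }}/p^{\prime }$ for $a,b\geq 0$ (immediate from concavity of $\log $, equivalently convexity of $\exp $), applied pointwise with $a=F(x)$, $b=G(x)$ and integrated, gives
\[
\int_{\mathbb{R}^{d}}F(x)G(x)\,dx\leq \frac{1}{p}\int_{\mathbb{R}^{d}}F(x)^{p}\,dx+\frac{1}{p^{\prime }}\int_{\mathbb{R}^{d}}G(x)^{p^{\prime }}\,dx=\frac{1}{p}+\frac{1}{p^{\prime }}=1.
\]
Undoing the normalization yields $\int FG\,dx\leq \left\Vert F\right\Vert _{p}\left\Vert G\right\Vert _{p^{\prime }}$, and rewriting $F,G$ in terms of $f,g,\omega $ recovers \eqref{holder}.

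There is essentially no genuine obstacle here; the only points deserving a word of care are the measurability of $\omega ^{1/p}$ and $\omega ^{1/p^{\prime }}$ (which follows from $\omega $ being measurable and a.e. positive), the bookkeeping of the degenerate cases, and the elementary Young inequality itself. Since the rest of the paper needs only the estimate as a black box, citing \cite[Theorem 16.14]{yeh} suffices; the factorization argument above is included mainly to make transparent that no structural hypothesis on $\omega $ (in particular nothing from $A_{\infty }$) is used beyond measurability and positivity a.e.
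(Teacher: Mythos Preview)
Your proof is correct. The paper does not supply its own proof of this lemma; it merely cites \cite[Theorem 16.14]{yeh} and uses the inequality as a black box. Your argument---absorbing the weight via the factorization $\omega=\omega^{1/p}\omega^{1/p'}$ to reduce to the unweighted H\"older inequality, and then deriving the latter from Young's inequality---is the standard textbook route and is exactly what the cited reference would contain. There is nothing to compare against in the paper itself.
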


\begin{theorem}
\label{ET}(\cite{CUMP}) Let $\mathcal{F}$ be a family of couples $\left(
f,g\right) $ of nonnegative functions and $d\in \mathbb{N}$. Suppose that,
for some $p_{0}\in \left( 0,\infty \right) $ and for every weight $\omega
\in A_{\infty }$ there holds inequality%
\begin{equation}
\int\nolimits_{\mathbb{R}^{d}}f\left( x\right) ^{p_{0}}\omega \left(
x\right) dx\leq c\int\nolimits_{\mathbb{R}^{d}}g\left( x\right)
^{p_{0}}\omega \left( x\right) dx\text{,\quad }\left( f,g\right) \in 
\mathcal{F}\text{,}  \label{weHypth}
\end{equation}%
provided the left hand side is finite. Then, for all $p\in \left( 0,\infty
\right) $ and all $\omega \in A_{\infty }$,%
\begin{equation}
\int\nolimits_{\mathbb{R}^{d}}f\left( x\right) ^{p}\omega \left( x\right)
dx\leq c\int\nolimits_{\mathbb{R}^{d}}g\left( x\right) ^{p}\omega \left(
x\right) dx\text{,\quad }\left( f,g\right) \in \mathcal{F}\text{,}
\label{concl}
\end{equation}%
holds when the left-hand side is finite.
\end{theorem}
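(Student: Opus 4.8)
The plan is to establish this by a Rubio de Francia extrapolation argument adapted to $A_{\infty}$ weights, treating the ranges $p>p_{0}$ and $p<p_{0}$ separately (the case $p=p_{0}$ being the hypothesis (\ref{weHypth})). Throughout I fix $\omega\in A_{\infty}$ and may assume $\int g^{p}\omega<\infty$ and $\int f^{p}\omega<\infty$. The starting observation is that, since $\omega\in A_{\infty}$, the measure $\omega\,dx$ is doubling, so the weighted maximal operator
\[
M_{\omega}h\left( x\right) \equiv \sup_{Q\ni x}\frac{1}{\left\langle \omega\right\rangle _{Q}}\int_{Q}\left\vert h\left( y\right) \right\vert \omega \left( y\right) dy
\]
is bounded on $L_{s,\omega}$ for every $s\in \left( 1,\infty \right] $, with norm depending only on $s,\left[ \omega \right] _{\infty },d$; this lets me run, for a fixed $s\in \left( 1,\infty \right) $ and $\psi \geq 0$ in $L_{s,\omega}$, the iteration
\[
\mathsf{R}_{\omega}\psi \equiv \sum\nolimits_{k=0}^{\infty }\frac{\left( M_{\omega}\right) ^{k}\psi }{2^{k}\left\Vert M_{\omega}\right\Vert _{\mathcal{B}\left( L_{s,\omega},L_{s,\omega}\right) }^{k}},\qquad \left( M_{\omega}\right) ^{0}\psi \equiv \psi ,
\]
which satisfies $\psi \leq \mathsf{R}_{\omega}\psi $, $\left\Vert \mathsf{R}_{\omega}\psi \right\Vert _{s,\omega}\leq 2\left\Vert \psi \right\Vert _{s,\omega}$, and $M_{\omega}\left( \mathsf{R}_{\omega}\psi \right) \leq 2\left\Vert M_{\omega}\right\Vert _{\mathcal{B}\left( L_{s,\omega},L_{s,\omega}\right) }\mathsf{R}_{\omega}\psi $; from this last $A_{1}$-type bound relative to $\omega\,dx$ together with a factorization lemma resting on the reverse-H\"older property of $A_{\infty}$ weights, one gets $\left( \mathsf{R}_{\omega}\psi \right) \omega \in A_{\infty}$ with $A_{\infty}$-constant controlled by $s,\left[ \omega \right] _{\infty },d$ only, i.e.\ uniformly in $\psi $.

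For $p>p_{0}$ I would set $s:=\left( p/p_{0}\right) ^{\prime }$; since $f^{p_{0}}\in L_{p/p_{0},\omega}$, pick by duality in $L_{p/p_{0},\omega}$ a function $h\geq 0$ with $\left\Vert h\right\Vert _{s,\omega}=1$ and $\left( \int f^{p}\omega \right) ^{p_{0}/p}=\int f^{p_{0}}h\,\omega$, put $H:=\mathsf{R}_{\omega}h$, note $H\omega \in A_{\infty}$ and $\int f^{p_{0}}H\omega \leq \left\Vert f^{p_{0}}\right\Vert _{p/p_{0},\omega}\left\Vert H\right\Vert _{s,\omega}\leq 2\left\Vert f\right\Vert _{p,\omega}^{p_{0}}<\infty $ by (\ref{holder}) with exponents $p/p_{0},s$, and then chain
\[
\left( \int f^{p}\omega \right) ^{p_{0}/p}=\int f^{p_{0}}h\,\omega \leq \int f^{p_{0}}H\,\omega \leq c\int g^{p_{0}}H\,\omega \leq c\left\Vert g^{p_{0}}\right\Vert _{p/p_{0},\omega}\left\Vert H\right\Vert _{s,\omega}\leq 2c\left( \int g^{p}\omega \right) ^{p_{0}/p},
\]
using $h\leq H$, the hypothesis (\ref{weHypth}) for the weight $H\omega $, and (\ref{holder}) once more; this gives $\left\Vert f\right\Vert _{p,\omega}\leq \left( 2c\right) ^{p_{0}/p}\left\Vert g\right\Vert _{p,\omega}$, which is (\ref{concl}) in this range.

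For $p<p_{0}$ there is no useful duality to lower the exponent, so I would argue differently. Put $r:=p_{0}/p>1$ and regard $d\mu :=f^{p}\omega\,dx$ as a finite measure; applying H\"older's inequality with exponents $r,r^{\prime }$ to $\mu \left( \mathbb{R}^{d}\right) =\int \left( f/g\right) ^{\left( p_{0}-p\right) /r}\left( f/g\right) ^{-\left( p_{0}-p\right) /r}d\mu $ and using $r-1=\left( p_{0}-p\right) /p$, one is led to
\[
\int f^{p}\omega \leq \left( \int f^{p_{0}}v\right) ^{1/r}\left( \int g^{p}\omega \right) ^{1/r^{\prime }},\qquad v:=g^{\,p-p_{0}}\omega ,
\]
since $\int \left( f/g\right) ^{p_{0}-p}d\mu =\int f^{p_{0}}g^{p-p_{0}}\omega $ and $\int \left( f/g\right) ^{-p}d\mu =\int g^{p}\omega $. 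Were $v$ an $A_{\infty}$ weight with $\int f^{p_{0}}v<\infty $, then (\ref{weHypth}) for $v$ together with the identity $\int g^{p_{0}}v=\int g^{p_{0}}g^{p-p_{0}}\omega =\int g^{p}\omega $ would close the loop to $\int f^{p}\omega \leq c^{1/r}\int g^{p}\omega $, i.e.\ (\ref{concl}). Since $g^{\,p-p_{0}}\omega $ is in general not in $A_{\infty}$ and $\int f^{p_{0}}v$ may be infinite, I would run this final step with the model weight $g^{\,p-p_{0}}$ replaced by $\psi _{n,\eta }:=\max \left( g,\eta \right) ^{p-p_{0}}\chi _{Q\left( 0,n\right) }$ (bounded with compact support, hence in $L_{s,\omega}$), pass to $v_{n,\eta }:=\left( \mathsf{R}_{\omega}\psi _{n,\eta }\right) \omega \in A_{\infty}$, apply (\ref{weHypth}) for $v_{n,\eta }$, and finally let $n\to \infty $ and then $\eta \to 0$.

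The hard part will be precisely this limiting step in the range $p<p_{0}$: the operator $\mathsf{R}_{\omega}$ only enlarges the model weight $g^{\,p-p_{0}}\omega $, so I must control the excess it introduces and show that $\int g^{p_{0}}v_{n,\eta }$ stays within a fixed multiple of $\int g^{p}\omega $ as $n\to \infty ,\eta \to 0$; the tools for this are that $\psi _{n,\eta }$ is supported in $Q\left( 0,n\right) $, that $\int _{Q\left( 0,n\right) }g^{p_{0}}\omega <\infty $, and that $\left( M_{\omega}\right) ^{k}$ of a function supported in $Q\left( 0,n\right) $ decays away from it, which have to be balanced against one another, and it is exactly here that the hypothesis is needed for \emph{every} $\omega \in A_{\infty}$ and not merely for a single fixed weight. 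A secondary, more routine point to verify is the factorization lemma asserting $\left( \mathsf{R}_{\omega}\psi \right) \omega \in A_{\infty}$ with $A_{\infty}$-constant independent of $\psi $.
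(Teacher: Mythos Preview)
The paper does not supply a proof of this statement: Theorem~\ref{ET} is simply quoted from \cite{CUMP} and used as a black box, so there is no in-paper argument to compare yours against. Your treatment of the range $p>p_{0}$ is the standard Rubio de Francia step from \cite{CUMP} and is correct, modulo the factorization lemma $\left(\mathsf{R}_{\omega}h\right)\omega\in A_{\infty}$ that you already flag.

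For $p<p_{0}$ there is a genuine structural gap, not merely a limiting technicality. You feed $\psi_{n,\eta}\approx g^{\,p-p_{0}}$ into $\mathsf{R}_{\omega}$; since $\mathsf{R}_{\omega}$ only enlarges, after invoking the hypothesis you are left with bounding $\int g^{p_{0}}\left(\mathsf{R}_{\omega}\psi_{n,\eta}\right)\omega$ from above by a fixed multiple of $\int g^{p}\omega$, and there is no mechanism for this: the maximal-type tail that $\mathsf{R}_{\omega}$ adds to $\psi_{n,\eta}$ is a global object not controlled by $\int g^{p}\omega$, and sending $n\to\infty$, $\eta\to 0$ does not cure it. The fix in \cite{CUMP} is to reverse what gets iterated. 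After the harmless rescaling $\left(f,g\right)\mapsto\left(f^{\alpha},g^{\alpha}\right)$ reducing to $p>1$, run $\mathsf{R}_{\omega}$ on $L_{p,\omega}$ applied to $\Phi_{0}:=f/\left\Vert f\right\Vert_{p,\omega}+g/\left\Vert g\right\Vert_{p,\omega}$ and set $v:=\left(\mathsf{R}_{\omega}\Phi_{0}\right)^{\,p-p_{0}}\omega$. Because the exponent $p-p_{0}$ is negative and $\mathsf{R}_{\omega}\Phi_{0}\geq g/\left\Vert g\right\Vert_{p,\omega}$, one gets $\int g^{p_{0}}v\leq\left\Vert g\right\Vert_{p,\omega}^{p_{0}}$ immediately, with no truncation or limit; and $\mathsf{R}_{\omega}\Phi_{0}\geq f/\left\Vert f\right\Vert_{p,\omega}$ gives $\int f^{p_{0}}v\leq\left\Vert f\right\Vert_{p,\omega}^{p_{0}}<\infty$ for free. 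That $v\in A_{\infty}$ is again a factorization lemma (negative powers of an $A_{1}(\omega)$ weight against $\omega\in A_{\infty}$), also in \cite{CUMP}. Your H\"{o}lder splitting is right; only the input to the Rubio de Francia algorithm has to be switched from $g^{\,p-p_{0}}$ to $g$ (and $f$) themselves.
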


\begin{proof}[\textbf{Proof of Theorem \protect\ref{Suf}}]
Let $\Psi $ be $1$-finite family of open bounded cubes $Q_{i}$ of $\mathbb{R}%
^{d}$ having Lebesgue measure $1$ and with sides parallel to coordinate
axes, such that $\left( \cup _{i}Q_{i}\right) \cup A=\mathbb{R}^{d}$ for
some null-set $A$. Since $u\in \mathbb{R}^{d}$ there exists $m\in \mathbb{Z}%
^{d}$ such that $m\leq u<(m+2)$. Let $Q+m$ be translation of the cube $Q$ by
vector $m$. We set $\left( Q_{i}+m\right) ^{\pm }:=\left( Q_{i-1}\cup
Q_{i}\cup Q_{i+1}\right) +m$. Then%
\[
\left\Vert S_{u}f\right\Vert _{p,\omega }^{p}=\sum\limits_{Q_{i}\in \Psi
}\int\limits_{Q_{i}}\left\vert \oint\limits_{\left[ -1/2,1/2\right]
^{d}}f(x+u+t)dt\right\vert ^{p}\omega (x)dx
\]%
\[
\leq \sum\limits_{Q_{i}\in \Psi }\int\limits_{Q_{i}}\left[
\oint\limits_{Q\left( x+u,1/2\right) }\omega ^{\frac{1}{p}}(t)\left\vert
f(t)\right\vert \omega ^{\frac{-1}{p}}(t)dt\right] ^{p}\omega (x)dx
\]%
\[
\leq \sum\limits_{Q_{i}\in \Psi }\int\limits_{Q_{i}}\left[ \left(
\oint\limits_{Q\left( x+u,1/2\right) }\omega (t)\left\vert f(t)\right\vert
^{p}dt\right) ^{\frac{1}{p}}\left( \oint\limits_{Q\left( x+u,1/2\right)
}\omega ^{\frac{-p^{\prime }}{p}}(t)dt\right) ^{\frac{1}{p^{\prime }}}\right]
^{p}\omega (x)dx
\]%
\[
\leq \sum\limits_{Q_{i}\in \Psi }\int\limits_{Q_{i}}\int\limits_{Q\left(
x+u,1/2\right) }\omega (t)\left\vert f(t)\right\vert ^{p}dt\left(
\oint\limits_{Q\left( x+u,1/2\right) }\omega ^{\frac{-p^{\prime }}{p}%
}(t)dt\right) ^{\frac{p}{p^{\prime }}}\omega (x)dx
\]%
\[
\leq 3^{2dp}\sum\limits_{Q_{i}\in \Psi }\oint\limits_{\left( Q_{i}+m\right)
^{\pm }}\omega (x)dx\left( \oint\limits_{\left( Q_{i}+m\right) ^{\pm
}}\omega ^{\frac{-1}{p-1}}(t)dt\right) ^{p-1}\int\limits_{\left(
Q_{i}+m\right) ^{\pm }}\omega (t)\left\vert f(t)\right\vert ^{p}dt
\]%
\[
\leq 3^{2dp}\left[ \omega \right] _{p}\sum\limits_{Q_{i}\in \Psi
}\int\limits_{\left( Q_{i}+m\right) ^{\pm }}\left\vert f(t)\right\vert
^{p}\omega (t)dt
\]%
\[
\leq 3^{2dp}\left[ \omega \right] _{p}\sum\limits_{Q_{i}\in \Psi }\left\{
\int\limits_{Q_{i-1}+m}+\int\limits_{Q_{i}+m}+\int\limits_{Q_{i+1}+m}\right%
\} \left\vert f(t)\right\vert ^{p}\omega (t)dt
\]%
\[
\leq 3^{2dp}\left[ \omega \right] _{p}\int\limits_{\mathbb{R}^{d}}\left\vert
f(t)\right\vert ^{p}\omega (t)\left\{ \sum\limits_{Q_{i}\in \Psi }\left(
\chi _{Q_{i-1}+m\left( t\right) }+\chi _{Q_{i}+m\left( t\right) }+\chi
_{Q_{i+1}+m\left( t\right) }\right) \right\} dt
\]%
\[
\leq 3^{2dp+1}\left[ \omega \right] _{p}\left\Vert f\right\Vert _{p,\omega
}^{p}.
\]

For $p=1$ we find%
\[
\left\Vert S_{u}f\right\Vert _{1,\omega }=\sum\limits_{Q_{i}\in \Psi
}\int\limits_{Q_{i}}\left\vert \oint\limits_{\left[ -1/2,1/2\right]
^{d}}f(x+u+t)dt\right\vert \omega (x)dx
\]%
\[
\leq \sum\limits_{Q_{i}\in \Psi }\int\limits_{Q_{i}}\int\limits_{Q\left(
x+u,1/2\right) }\omega (t)\left\vert f(t)\right\vert \frac{1}{\omega (t)}%
dt\omega (x)dx
\]%
\[
\leq 3^{d}\sum\limits_{Q_{i}\in \Psi }\frac{1}{\left\vert \left(
Q_{i}+m\right) ^{\pm }\right\vert }\int\limits_{\left( Q_{i}+m\right) ^{\pm
}}\omega (x)dx\left( \underset{t\in \left( Q_{i}+m\right) ^{\pm }}{esssup}\frac{1}{\omega (t)}\right) \int\limits_{\left( Q_{i}+m\right) ^{\pm
}}\left\vert f(t)\right\vert \omega (t)dt
\]%
\[
\leq 3^{d}\left[ \gamma \right] _{1}\sum\limits_{Q_{i}\in \Psi }\left\{
\int\limits_{Q_{i-1}+m}+\int\limits_{Q_{i}+m}+\int\limits_{Q_{i+1}+m}\right%
\} \left\vert f(t)\right\vert \omega (t)dt
\]%
\[
=3^{d}\left[ \gamma \right] _{1}\int\limits_{\mathbb{R}^{d}}\left\vert
f(t)\right\vert \omega (t)\left\{ \sum\limits_{Q_{i}\in Q}\chi
_{Q_{i-1}+m}\left( t\right) +\sum\limits_{Q_{i}\in Q}\chi _{Q_{i}+m}\left(
t\right) +\sum\limits_{Q_{i}\in Q}\chi _{Q_{i+1}+m}\left( t\right) \right\}
dt
\]%
\[
\leq 3^{d+1}\left[ \gamma \right] _{1}\int\limits_{\mathbb{R}^{d}}\left\vert
f(t)\right\vert \omega (t)dt=3^{d+1}\left[ \gamma \right] _{1}\left\Vert
f\right\Vert _{1,\omega },
\]%
as required.
\end{proof}

\begin{proof}[\textbf{Proof of Theorem \protect\ref{Suwf}}]
Let $\omega \in A_{\infty }$. (a) First, we consider the case $p\in \left(
1,\infty \right) $. Suppose that $f\in L_{p,\omega }$. Then, there is $%
a_{0}\equiv e^{2^{11+d}\left[ \omega \right] _{\infty }}>1$ (see \cite[p.786]%
{hyPrz13}) such that, for $\tilde{p}>a_{0}$, we have $\omega \in A_{\tilde{p}%
}$. Setting $a\equiv a_{0}+0,01$ we obtain $\omega \in A_{a}.$

(1$^{\circ }$) If $a\leq p^{\prime }$, then, $\omega \in A_{p^{\prime }}$
and, hence, $\omega ^{1-p}\in A_{p}.$ By Theorem \ref{Suf}, for any $u\in 
\mathbb{R}^{d}$, there holds $S_{u}:L_{p^{\prime },\omega }\hookrightarrow
L_{p^{\prime },\omega }$ and $S_{u}:L_{p,\omega ^{1-p}}\hookrightarrow
L_{p,\omega ^{1-p}}$. Now, following step by step the proof of Theorem 1.1
of \cite[p.369]{Jaw} of Jawerth, we find that $S_{u,\omega }:L_{p,\omega
}\hookrightarrow L_{p,\omega }$ for any $u\in \mathbb{R}^{d}.$

(2$^{\circ }$) If $a>p^{\prime }$, then, $\omega \in A_{a}$ and, hence, $%
\omega ^{1-a^{\prime }}\in A_{a^{\prime }}.$ By Theorem \ref{Suf}, for any $%
u\in \mathbb{R}^{d}$, there holds $S_{u}:L_{a,\omega }\hookrightarrow
L_{a,\omega }$ and $S_{u}:L_{a^{\prime },\omega ^{1-a^{\prime
}}}\hookrightarrow L_{a^{\prime },\omega ^{1-a^{\prime }}}$. Again,
following step by step the proof of Theorem 1.1 of \cite[p.369]{Jaw} of
Jawerth, we find that $S_{u,\omega }:L_{a^{\prime },\omega }\hookrightarrow
L_{a^{\prime },\omega }$ for any $u\in \mathbb{R}^{d}.$ Since $S_{u,\omega
}:L_{\infty ,\omega }\hookrightarrow L_{\infty ,\omega }$ and $S_{u,\omega
}:L_{a^{\prime },\omega }\hookrightarrow L_{a^{\prime },\omega }$, using
Marcinkiewicz interpolation theorem, for any $p\in \left( a^{\prime },\infty
\right) $ we get $S_{u,\omega }:L_{p,\omega }\hookrightarrow L_{p,\omega }$
for any $u\in \mathbb{R}^{d}.$ Namely, for $a>p^{\prime },$ we have $%
S_{u,\omega }:L_{p,\omega }\hookrightarrow L_{p,\omega }$ for any $u\in 
\mathbb{R}^{d}$, as desired.

(b) We consider the case $\omega \in A_{\infty }$, $p\in \left( 0,\infty
\right) $ and $f\in L_{p,\omega }$. This case follows from (a) and
extrapolation result Theorem \ref{ET}.
\end{proof}

\begin{proof}[\textbf{Proof of Theorem \protect\ref{Ruwf}}]
Let $0<p<\infty $, $\omega \in A_{\infty }$ and $p^{\ast }\equiv \min
\left\{ 1,p\right\} .$ Then,%
\[
\left\Vert \mathcal{R}_{u,\omega }f\right\Vert _{p,\omega }^{p^{\ast
}}=\left\Vert \sum\limits_{k=0}^{1}\tfrac{1}{2^{k}c^{k}}\left( S_{u,\omega
}\right) ^{k}f\right\Vert _{p,\omega }^{p^{\ast }}\leq
2\sum\limits_{k=0}^{1}\tfrac{1}{2^{kp^{\ast }}c^{kp^{\ast }}}\left\Vert
\left( S_{u,\omega }\right) ^{k}f\right\Vert _{p,\omega }^{p^{\ast }}\leq
4\left\Vert f\right\Vert _{p,\omega }^{p^{\ast }}.
\]
\end{proof}

\begin{proof}[\textbf{Proof of Theorem \protect\ref{UCB}}]
(a) By Remark \ref{remCc}(b), $C_{c}$ is a dense subset of $L_{p,\omega }$.
First we consider the case $0<p<1$ and prove that $F_{H}\left( u\right) $ is
bounded and uniformly continuous on $\mathbb{R}^{d}$ for functions $H\in
C_{c}$, where $q,a,r$ and $G$ is from Definition \ref{ef} with $G\in
L_{r^{\prime },\omega }$ and $\left\Vert G\right\Vert _{r^{\prime },\omega
}=1$. Boundedness of $F_{H}\left( \cdot \right) $ is easy consequence of the
H\"{o}lder's inequality (\ref{holder}) and Theorem \ref{Ruwf}. Indeed:%
\[
\left\vert F_{H}\left( u\right) \right\vert \leq \int\nolimits_{\mathbb{R}%
^{d}}\left\vert \mathcal{R}_{u,\omega }f\left( x\right) \right\vert
^{q}\left\vert G(x)\right\vert \omega \left( x\right) dx\leq \left\Vert 
\mathcal{R}_{u,\omega }f\right\Vert _{p,\omega }^{q}\left\Vert G\right\Vert
_{r^{\prime },\omega }<\infty .
\]%
On the other hand, note that $H$ is uniformly continuous on $\mathbb{R}^{d}$%
, see e.g. Lemma 23.42 of \cite[pp.557-558]{yeh} for $d=1$. Take $%
\varepsilon >0$ and $u_{1},u_{2},x\in \mathbb{R}^{d}$. Then, for this $%
\varepsilon ,$ there exists a $\delta \equiv \delta \left( \varepsilon
\right) >0$ such that%
\[
\left\vert H\left( u_{1}+x\right) -H\left( u_{2}+x\right) \right\vert \leq 
\frac{\varepsilon ^{1/q}}{2^{q}\left( 1+\left\langle \omega \right\rangle _{%
\text{supp}H}\right) }
\]%
when $\left\vert u_{1}-u_{2}\right\vert <\delta .$ Then,%
\[
\left\vert F_{H}\left( u_{1}\right) \text{-}F_{H}\left( u_{2}\right)
\right\vert \leq \int\nolimits_{\mathbb{R}^{d}}\left\vert \mathcal{R}%
_{u_{1},\omega }H\left( x\right) ^{q}\text{-}\mathcal{R}_{u_{2},\omega
}H\left( x\right) ^{q}\right\vert \left\vert G(x)\right\vert \omega \left(
x\right) dx
\]%
\[
\leq 2^{q-1}\int\nolimits_{\text{supp}H}\left\vert \mathcal{R}_{u_{1},\omega
}H\left( x\right) \text{-}\mathcal{R}_{u_{2},\omega }H\left( x\right)
\right\vert ^{q}\left\vert G(x)\right\vert \omega \left( x\right) dx
\]%
\[
\leq \frac{2^{q-1}\varepsilon }{2^{q}\left( 1+\left\langle \omega
\right\rangle _{\text{supp}H}\right) }\int\nolimits_{\text{supp}H}\left\vert
G(x)\right\vert \omega \left( x\right) dx\leq \frac{\varepsilon \left\langle
\omega \right\rangle _{\text{supp}H}}{2\left( 1+\left\langle \omega
\right\rangle _{\text{supp}H}\right) }\left\Vert G\right\Vert _{r^{\prime
},\omega }<\varepsilon .
\]

Thus conclusion of Theorem \ref{UCB} follows on $C_{c}$.

For the case $f\in L_{p,\omega }$ there exists an $H\in C_{c}$ so that%
\[
\left\Vert f-H\right\Vert _{p,\omega }<\frac{\xi ^{1/q}}{2^{1/q}\left(
1+2^{q}4^{q/p}\right) ^{1/q}}
\]%
for any $\xi >0$. Therefore%
\[
\left\vert F_{f}\left( u_{1}\right) -F_{f}\left( u_{2}\right) \right\vert
\leq \left\vert F_{f}\left( u_{1}\right) -F_{H}\left( u_{1}\right)
\right\vert +
\]%
\[
+\left\vert F_{H}\left( u_{1}\right) -F_{H}\left( u_{2}\right) \right\vert
+\left\vert F_{H}\left( u_{2}\right) -F_{f}\left( u_{2}\right) \right\vert
\]%
\[
\leq 2^{q-1}\int\nolimits_{\mathbb{R}^{d}}\left\vert \mathcal{R}%
_{u_{1},\omega }f\left( x\right) \text{-}\mathcal{R}_{u_{1},\omega }H\left(
x\right) \right\vert ^{q}\left\vert G(x)\right\vert \omega \left( x\right)
dx+\frac{\xi }{2}+
\]%
\[
+2^{q-1}\int\nolimits_{\mathbb{R}^{d}}\left\vert \mathcal{R}_{u_{2},\omega
}H\left( x\right) \text{-}\mathcal{R}_{u_{2},\omega }f\left( x\right)
\right\vert ^{q}\left\vert G(x)\right\vert \omega \left( x\right) dx
\]%
\[
\leq 2^{q-1}\left\Vert \mathcal{R}_{u_{1},\omega }\left( f-H\right)
\right\Vert _{p,\omega }^{q}+2^{q-1}\left\Vert \mathcal{R}_{u_{2},\omega
}\left( f-H\right) \right\Vert _{p,\omega }^{q}+\frac{\xi }{2}
\]%
\[
\leq 2^{q}4^{q/p}\left\Vert f-H\right\Vert _{p,\omega }^{q}+\frac{\xi }{2}%
\leq 2^{q}4^{q/p}\frac{\xi }{2\left( 1+2^{q}4^{q/p}\right) }+\frac{\xi }{2}%
\leq \frac{\xi }{2}+\frac{\xi }{2}=\xi .
\]%
As a result we have $F_{f}\in \mathcal{C}(\mathbb{R}^{d}\mathbf{)}$. In the
case $1\leq p<\infty $, proof of $F_{f}\in \mathcal{C}(\mathbb{R}^{d}\mathbf{%
)}$ is the same with minor modification of above proof.
\end{proof}

\begin{proof}[\textbf{Proof of Theorem \protect\ref{TR}}]
Let $0<p<\infty $, $\omega \in A_{\infty }$, and $0\leq f,g\in L_{p,\omega }$%
. If $\left\Vert g\right\Vert _{p,\omega }=\left\Vert f\right\Vert
_{p,\omega }$ or $\left\Vert g\right\Vert _{p,\omega }=\left\Vert
f\right\Vert _{p,\omega }=0$, then, result (\ref{concl}) is obvious. So we
assume that $\left\Vert g\right\Vert _{p,\omega },\left\Vert f\right\Vert
_{p,\omega }>0$ and $\left\Vert g\right\Vert _{p,\omega }\not=\left\Vert
f\right\Vert _{p,\omega }$. Case (1$^{\circ }$): Let $1\leq p<\infty $ and
we define, for $g\in L_{p,\omega }$, function%
\[
F_{g}\left( u,G_{0},p,\omega \right) =\int\nolimits_{\mathbb{R}^{d}}\mathcal{%
R}_{u,\omega }g\left( x\right) \left\vert G_{0}(x)\right\vert \omega \left(
x\right) dx,\quad u\in \mathbb{R}^{d}
\]%
with $G_{0}\in \mathcal{Z}\left( p,\omega \right) $ and use this to obtain%
\[
\left\Vert F_{g}\right\Vert _{\mathcal{C}\left( \mathbb{R}^{d}\right)
}=\left\Vert \int\nolimits_{\mathbb{R}^{d}}\mathcal{R}_{u,\omega }g\left(
x\right) \left\vert G_{0}(x)\right\vert \omega \left( x\right) dx\right\Vert
_{\mathcal{C}\left( \mathbb{R}^{d}\right) }
\]%
\[
\leq \sup_{u\in \mathbb{R}^{d}}\int\nolimits_{\mathbb{R}^{d}}\left\vert 
\mathcal{R}_{u,\omega }g\left( x\right) \right\vert \left\vert
G_{0}(x)\right\vert \omega \left( x\right) dx\leq \sup_{u\in \mathbb{R}%
^{d}}\left\Vert \mathcal{R}_{u,\omega }g\right\Vert _{p,\omega }\left\Vert
G_{0}\right\Vert _{p^{\prime },\omega }\leq c\left\Vert g\right\Vert
_{p,\omega }.
\]

On the other hand, for any $\varepsilon >0$ (see e.g. Theorem 18.4 of \cite%
{yeh}) we can choose appropriately an $G_{\varepsilon }\in \mathcal{Z}\left(
p,\omega \right) $ satisfying%
\[
\int\nolimits_{\mathbb{R}^{d}}f\left( x\right) \left\vert G_{\varepsilon
}\left( x\right) \right\vert \omega \left( x\right) dx\geq \left\Vert
f\right\Vert _{p,\omega }-\varepsilon ,
\]%
and one can find%
\[
\left\Vert F_{f}\left( u,G_{\varepsilon },p,\omega \right) \right\Vert _{%
\mathcal{C}\left( \mathbb{R}^{d}\right) }\geq \left\vert F_{f}\left(
0,G_{\varepsilon },p,\omega \right) \right\vert \geq \int\nolimits_{\mathbb{R%
}^{d}}\mathcal{R}_{0,\omega }f\left( x\right) \left\vert G_{\varepsilon
}(x)\right\vert \omega \left( x\right) dx
\]%
\[
\geq \int\nolimits_{\mathbb{R}^{d}}f\left( x\right) \left\vert
G_{\varepsilon }(x)\right\vert \omega \left( x\right) dx=\left\Vert
f\right\Vert _{p,\omega }-\varepsilon .
\]%
By hypothesis, we get, for any $\varepsilon >0,$%
\begin{equation}
\left\Vert f\right\Vert _{p,\omega }-\varepsilon \leq \left\Vert F_{f}\left(
\cdot ,G_{\varepsilon },p,\omega \right) \right\Vert _{\mathcal{C}\left( 
\mathbb{R}^{d}\right) }\leq c\left\Vert F_{g}\left( \cdot ,G_{0},p,\omega
\right) \right\Vert _{\mathcal{C}\left( \mathbb{R}^{d}\right) }\leq
C\left\Vert g\right\Vert _{p,\omega }.  \label{sn0}
\end{equation}%
Now taking as $\varepsilon \rightarrow 0+$ we find $\left\Vert f\right\Vert
_{p,\omega }\leq C\left\Vert g\right\Vert _{p,\omega }.$ In the general case 
$f,g\in L_{p,\omega }$ we get $\left\Vert f\right\Vert _{p,\omega }\leq
2C\left\Vert g\right\Vert _{p,\omega }.$

Case (2$^{\circ }$): Case $p\in \left( 0,1\right) $ can be obtained using
the same procedure given in the Case (1$^{\circ }$) with small
modifications. 
\[
\left\Vert F_{g}\right\Vert _{\mathcal{C}\left( \mathbb{R}^{d}\right)
}=\left\Vert \int\nolimits_{\mathbb{R}^{d}}\left( \mathcal{R}_{u,\omega
}g\left( x\right) \right) ^{q}\left\vert \tilde{G}_{0}(x)\right\vert \omega
\left( x\right) dx\right\Vert _{\mathcal{C}\left( \mathbb{R}^{d}\right) }
\]%
\[
\leq \sup_{u\in \mathbb{R}^{d}}\left\Vert \mathcal{R}_{u,\omega
}g\right\Vert _{p,\omega }^{q}\left\Vert \tilde{G}_{0}\right\Vert
_{r^{\prime },\omega }\leq c\left\Vert g\right\Vert _{p,\omega }^{q}.
\]

On the other hand, for any $\varepsilon >0$ and appropriately chosen $\tilde{%
G}_{\varepsilon }\in \mathcal{Z}\left( r,\omega \right) $ with 
\[
\int\nolimits_{\mathbb{R}^{d}}f\left( x\right) ^{q}\left\vert \tilde{G}%
_{\varepsilon }\left( x\right) \right\vert \omega \left( x\right) dx\geq
\left\Vert f\right\Vert _{p,\omega }^{q}-\varepsilon \text{,}
\]%
one can find%
\[
\left\Vert F_{f}\left( \cdot ,\tilde{G}_{\varepsilon },p,\omega \right)
\right\Vert _{\mathcal{C}\left( \mathbb{R}^{d}\right) }\geq \left\vert
F_{f}\left( 0\right) \right\vert \geq \int\nolimits_{\mathbb{R}^{d}}\left( 
\mathcal{R}_{0,\omega }f\left( x\right) \right) ^{q}\left\vert G\left(
x\right) \right\vert \omega \left( x\right) dx
\]%
\[
\geq \int\nolimits_{\mathbb{R}^{d}}f\left( x\right) ^{q}\left\vert G\left(
x\right) \right\vert \omega \left( x\right) dx\geq \left\Vert f\right\Vert
_{p,\omega }^{q}-\varepsilon .
\]%
Then by hypothesis, for any $\varepsilon >0$,%
\[
\left\Vert f\right\Vert _{p,\omega }^{q}-\varepsilon \leq \left\Vert
F_{f}\left( \cdot ,\tilde{G}_{\varepsilon },p,\omega \right) \right\Vert _{%
\mathcal{C}\left( \mathbb{R}^{d}\right) }\leq c\left\Vert F_{g}\left( \cdot ,%
\tilde{G}_{0},p,\omega \right) \right\Vert _{\mathcal{C}\left( \mathbb{R}%
^{d}\right) }\leq C\left\Vert g\right\Vert _{p,\omega }^{q}.
\]%
If we take $\varepsilon \rightarrow 0+$, then we obtain desired result $%
\left\Vert f\right\Vert _{p,\omega }\leq C\left\Vert g\right\Vert _{p,\omega
}.$ For the general case $f,g\in L_{p,\omega }$ we get $\left\Vert
f\right\Vert _{p,\omega }\leq 2C\left\Vert g\right\Vert _{p,\omega }.$
\end{proof}

\begin{proof}[\textbf{Proof of Theorem \protect\ref{Sduf}}]
Equalities in (\ref{Sduw01}) is follow from definitions:%
\[
S_{u,\omega }S_{\delta ,v}f\left( \cdot \right) =(\left\langle \omega
\right\rangle _{\left[ -1/2,1/2\right] ^{d}})^{-1}\int\limits_{\left[
-1/2,1/2\right] ^{d}}\left( S_{\delta ,v}f\right) \left( \cdot +u+t\right)
\omega \left( t\right) dt
\]%
\[
=(\left\langle \omega \right\rangle _{\left[ -1/2,1/2\right]
^{d}})^{-1}\int\limits_{\left[ -1/2,1/2\right] ^{d}}\int\limits_{\left[
-\delta /2,\delta /2\right] ^{d}}f\left( \cdot +u+t+v+s\right) ds\omega
\left( t\right) dt
\]%
\[
=\int\limits_{\left[ -\delta /2,\delta /2\right] ^{d}}(\left\langle \omega
\right\rangle _{\left[ -1/2,1/2\right] ^{d}})^{-1}\int\limits_{\left[
-1/2,1/2\right] ^{d}}f\left( \cdot +u+t+v+s\right) \omega \left( t\right)
dtds
\]%
\begin{equation}
\text{=}\int\nolimits_{\left[ -\delta /2,\delta /2\right] ^{d}}S_{u,\omega
}f\left( \cdot +v+s\right) ds\text{=}S_{\delta ,v}S_{u,\omega }f\left( \cdot
\right) .  \label{jaku}
\end{equation}%
Second equality in (\ref{Sduw01}) is follow from (\ref{jaku}). Equality in (%
\ref{InSduf}) follows from (\ref{Sduw01}) and (\ref{efef}). Now we give the
proof of inequality in (\ref{InSduf}). Since $F_{S_{\delta ,v}}$=$S_{\delta
,v}F_{f}$, we get by (\ref{sn0}) that%
\[
\left\Vert S_{\delta ,u}f\right\Vert _{p,\omega }\leq \left\Vert
F_{S_{\delta ,v}}\right\Vert _{\mathcal{C}\left( \mathbb{R}^{d}\right)
}=\left\Vert S_{\delta ,v}F_{f}\right\Vert _{\mathcal{C}\left( \mathbb{R}%
^{d}\right) }\leq \left\Vert F_{f}\right\Vert _{\mathcal{C}\left( \mathbb{R}%
^{d}\right) }\leq c\left\Vert f\right\Vert _{p,\omega }.
\]
\end{proof}

\begin{proof}[\textbf{Proof of Theorem \protect\ref{frac}}]
(1$^{\circ }$) Let $p\in \left( 1,\infty \right) $. Since $F_{V_{\delta
}f}=V_{\delta }\left( F_{f}\right) $, we get $F_{\left( V_{\delta }\right)
^{s}f}=\left( V_{\delta }\right) ^{s}\left( F_{f}\right) $ for any $s\in 
\mathbb{N}$. For any $N\in \mathbb{N}$, we have%
\[
F_{\sum\nolimits_{s=0}^{N}(-1)^{s}C_{s}^{k}\left( V_{\delta }\right)
^{s}f}=\sum\nolimits_{s=0}^{N}(-1)^{s}C_{s}^{k}\left( V_{\delta }\right)
^{s}\left( F_{f}\right) ,
\]%
\[
\left\Vert F_{\sum\nolimits_{s=0}^{N}(-1)^{s}C_{s}^{k}\left( V_{\delta
}\right) ^{s}f}\right\Vert _{\mathcal{C}\left( \mathbb{R}^{d}\right)
}=\left\Vert \sum\nolimits_{s=0}^{N}(-1)^{s}C_{s}^{k}\left( V_{\delta
}\right) ^{s}\left( F_{f}\right) \right\Vert _{\mathcal{C}\left( \mathbb{R}%
^{d}\right) }
\]%
\[
\leq \sum\nolimits_{s=0}^{N}\left\vert C_{s}^{k}\right\vert \left\Vert
F_{f}\right\Vert _{\mathcal{C}\left( \mathbb{R}^{d}\right) }\leq \left\Vert
F_{f}\right\Vert _{\mathcal{C}\left( \mathbb{R}^{d}\right)
}+\sum\nolimits_{s=1}^{\infty }\frac{c\left( k\right) }{s^{1+k}}\left\Vert
F_{f}\right\Vert _{\mathcal{C}\left( \mathbb{R}^{d}\right) }
\]%
\[
<c\left\Vert F_{f}\right\Vert _{\mathcal{C}\left( \mathbb{R}^{d}\right) }
\]%
by $\left\vert C_{s}^{k}\right\vert \leq c_{k}s^{-1-k}$ (\cite[p.14, (1.51)]%
{skm}). Now using Corollary \ref{Coroper1} and Theorem \ref{TR} we obtain%
\[
\left\Vert \left( E-V_{\delta }\right) ^{k}f\right\Vert _{p,\omega
}=\lim_{N\rightarrow \infty }\left\Vert
\sum\nolimits_{s=0}^{N}(-1)^{s}C_{s}^{k}\left( V_{\delta }\right)
^{s}f\right\Vert _{p,\omega }
\]%
\[
\leq \lim_{N\rightarrow \infty }\left\Vert
F_{\sum\nolimits_{s=0}^{N}(-1)^{s}C_{s}^{k}\left( V_{\delta }\right)
^{s}f}\right\Vert _{\mathcal{C}\left( \mathbb{R}^{d}\right)
}=\lim_{N\rightarrow \infty }\left\Vert
\sum\nolimits_{s=0}^{N}(-1)^{s}C_{s}^{k}\left( V_{\delta }\right)
^{s}\left( F_{f}\right) \right\Vert _{\mathcal{C}\left( \mathbb{R}%
^{d}\right) }
\]%
\[
\leq 2^{k}\left\Vert F_{f}\right\Vert _{\mathcal{C}\left( \mathbb{R}%
^{d}\right) }\leq c\left\Vert f\right\Vert _{p,\omega }.
\]

(2$^{\circ }$) For general case $p\in \left( 0,\infty \right) $, we use (1$%
^{\circ }$) and Theorem \ref{ET} to finish proof.
\end{proof}

\begin{proof}[\textbf{Proof of Theorem \protect\ref{TRver}}]
Let $1<p<\infty $, $\omega \in A_{\infty }$, and $\left( f,g_{j}\right) \in 
\mathcal{F}$ with $f,g_{j}\in L_{p,\omega }$. If $\left\Vert
g_{j}\right\Vert _{l_{s}^{m}\left( L_{p,\omega }\right) }=\left\Vert
f\right\Vert _{p,\omega }=0$, then, result (\ref{hkm}) is obvious. So we
assume that $\left\Vert g_{j}\right\Vert _{l_{s}^{m}\left( L_{p,\omega
}\right) },\left\Vert f\right\Vert _{p,\omega }>0$. Since $\omega \in
A_{\infty }$, there is $a_{0}\equiv e^{2^{11+d}\left[ \omega \right]
_{\infty }}>1$ (\cite[p.786]{hyPrz13}) such that, we obtain $\omega \in
A_{a} $ with $a\equiv a_{0}+0,01$. Let $\Psi $ be $1$-finite family of open
bounded cubes $Q_{i}$ of $\mathbb{R}^{d}$ having Lebesgue measure $1$, such
that $\left( \cup _{i}Q_{i}\right) \cup A=\mathbb{R}^{d}$ for some null-set $%
A$. Then,%
\[
\left\Vert F_{g_{j}}\right\Vert _{L_{a}}^{a}=\int\nolimits_{\mathbb{R}%
^{d}}\left\vert F_{g_{j}}\left( u\right) \right\vert ^{a}du=\sum_{Q_{i}\in
\Psi }\int\nolimits_{Q_{i}}\left\vert F_{g_{j}}\left( u\right) \right\vert
^{a}du
\]%
\[
\leq \sum_{Q_{i}\in \Psi }\int\nolimits_{Q_{i}}c^{a}\left\Vert
g_{j}\right\Vert _{p,\omega }^{a}\chi _{Q_{i}}\left( u\right)
du=c^{a}\left\Vert g_{j}\right\Vert _{p,\omega }^{a}\sum_{Q_{i}\in \Psi
}\int\nolimits_{Q_{i}}\chi _{Q_{i}}\left( u\right) du=c^{a}\left\Vert
g_{j}\right\Vert _{p,\omega }^{a}.
\]%
In this case%
\[
\left\Vert F_{f}\right\Vert _{L_{a}}\leq c\left\Vert F_{g_{j}}\right\Vert
_{l_{s}^{m}\left( L_{a}\right) }=c\left( \sum\nolimits_{j=0}^{m}\left\Vert
F_{g_{j}}\right\Vert _{L_{a}}^{s}\right) ^{1/s}
\]%
\[
=c\left( \sum\nolimits_{j=0}^{m}\left\Vert g_{j}\right\Vert _{p,\omega
}^{s}\right) ^{1/s}=c\left\Vert g\right\Vert _{l_{s}^{m}\left( L_{p,\omega
}\right) }.
\]%
On the other hand,%
\[
\left\Vert F_{f}\right\Vert _{L_{a}}=\left( \int\nolimits_{\mathbb{R}%
^{d}}\left\vert F_{f}\left( u\right) \right\vert ^{a}du\right) ^{1/a}\geq
\left( \int\nolimits_{\left[ 0,1\right] ^{d}}\left\vert F_{f}\left( u\right)
\right\vert ^{a}du\right) ^{1/a}\geq \left\Vert f\right\Vert _{p,\omega }.
\]%
Combining these inequalities we get%
\begin{equation}
\left\Vert f\right\Vert _{p,\omega }\leq \left\Vert F_{f}\right\Vert
_{L_{a}}\leq c\left\Vert F_{g_{j}}\right\Vert _{l_{s}^{m}\left( L_{a}\right)
}\leq c\left\Vert g\right\Vert _{l_{s}^{m}\left( L_{p,\omega }\right) }.
\label{sn0a}
\end{equation}
\end{proof}

\begin{proof}[\textbf{Proof of Theorem \protect\ref{ShrpMarch}}]
Let $r\in \mathbb{N}$, $\omega \in A_{\infty }$, $p\in \left( 1,\infty
\right) $, $\delta \in \left( 0,\infty \right) $, and $f\in L_{p,\omega }$.
Then there is $a_{0}\equiv e^{2^{11+d}\left[ \omega \right] _{\infty }}>1$ (%
\cite[p.786]{hyPrz13}) such that, we obtain $\omega \in A_{a}$ with $a\equiv
a_{0}+0,01$. Then there exist $m\in \mathbb{N}$ such that%
\[
\left\Vert \left( E\text{-}T_{\delta }\right) ^{2r}\left( F_{f}\right)
\right\Vert _{L_{a}}^{s}\geq c\sum\limits_{j=0}^{m}2^{-j2rs}K_{\Delta
^{r}}\left( F_{f},\left( 2^{j}\delta \right) ^{2r+2},L_{a}\right) ^{s},
\]%
for $s\equiv \max \left\{ a,2\right\} $. On the other hand, we know that%
\[
\left\Vert \left( E-V_{\delta }\right) ^{r}\left( F_{f}\right) \right\Vert
_{L_{a}}\approx K_{\Delta ^{r}}\left( F_{f},\delta ^{2r},L_{a}\right)
\approx \left\Vert \left( E-T_{\delta }\right) ^{2r}\left( F_{f}\right)
\right\Vert _{L_{a}}.
\]%
As a consequence,%
\[
\left\Vert \left( E\text{-}V_{\delta }\right) ^{r}\left( F_{f}\right)
\right\Vert _{L_{a}}^{s}\geq c\sum\limits_{j=0}^{m}2^{-j2rs}\left\Vert
\left( E\text{-}V_{\delta }\right) ^{r+1}\left( F_{f}\right) \right\Vert
_{L_{a}}^{s}.
\]%
From the last inequality and Theorem \ref{TRver} we obtain (\ref{InShrpMarch}%
).
\end{proof}

\begin{proof}[Proof of Theorem \protect\ref{Jacks}]
It is enough to proof%
\begin{equation}
A_{2\sigma }\left( f\right) _{p\left( \cdot \right) }\leq c\left\Vert \left(
I-V_{1/\left( 2\sigma \right) }\right) ^{r}f\right\Vert _{p\left( \cdot
\right) }.  \label{JE}
\end{equation}%
Let $g_{\sigma }$ be an exponential type entire function of degree $\leq
\sigma $, belonging to $\mathcal{C}(\mathbb{R}^{d})$, as the best
approximation of $F_{f}\in \mathcal{C}(\mathbb{R}^{d})$. Since $F_{J\left(
f,\sigma \right) }=J\left( F_{f},\sigma \right) $ and $J\left( g_{\sigma
},\sigma \right) =g_{\sigma }$, there holds%
\[
A_{2\sigma }\left( f\right) _{p,\omega }\leq \left\Vert f-J\left( f,\sigma
\right) \right\Vert _{p,\omega }\leq c\left\Vert F_{f-J\left( f,\sigma
\right) }\right\Vert _{\mathcal{C}(\mathbb{R}^{d})}=c\left\Vert
F_{f}-F_{J\left( f,\sigma \right) }\right\Vert _{\mathcal{C}(\mathbb{R}^{d})}
\]%
\[
=c\left\Vert F_{f}-J\left( F_{f},\sigma \right) \right\Vert _{\mathcal{C}(%
\mathbb{R}^{d})}=c\left\Vert F_{f}-g_{\sigma }+g_{\sigma }-J\left(
F_{f},\sigma \right) \right\Vert _{\mathcal{C}(\mathbb{R}^{d})}
\]%
\[
=c\left\Vert F_{f}-g_{\sigma }+J\left( g_{\sigma },\sigma \right) -J\left(
F_{f},\sigma \right) \right\Vert _{\mathcal{C}(\mathbb{R}^{d})}=c\left\Vert
F_{f}-g_{\sigma }+J\left( g_{\sigma }-F_{f},\sigma \right) \right\Vert _{%
\mathcal{C}(\mathbb{R}^{d})}
\]%
\[
\leq c(A_{\sigma }\left( F_{f}\right) _{\mathcal{C}(\mathbb{R}%
^{d})}+cA_{\sigma }\left( F_{f}\right) _{\mathcal{C}(\mathbb{R}%
^{d})})=cA_{\sigma }\left( F_{f}\right) _{\mathcal{C}(\mathbb{R}^{d})}.
\]

Therefore%
\[
A_{2\sigma }\left( f\right) _{p,\omega }\leq cA_{\sigma }\left( F_{f}\right)
_{\mathcal{C}(\mathbb{R}^{d})}\leq c\left\Vert \left( I-T_{\frac{1}{2\sigma }%
}\right) ^{2r}\left( F_{f}\right) \right\Vert _{\mathcal{C}(\mathbb{R}%
^{d})}\leq c\left\Vert \left( I-V_{\frac{1}{2\sigma }}\right) ^{r}\left(
F_{f}\right) \right\Vert _{\mathcal{C}(\mathbb{R}^{d})}
\]%
\[
=c\left\Vert F_{\left( I-V_{1/\left( 2\sigma \right) }\right)
^{r}f}\right\Vert _{\mathcal{C}(\mathbb{R}^{d})}\leq c\left\Vert \left(
I-V_{1/\left( 2\sigma \right) }\right) ^{r}f\right\Vert _{p,\omega }.
\]
\end{proof}

\end{document}